%% 
%% Copyright 2007-2020 Elsevier Ltd
%% 
%% This file is part of the 'Elsarticle Bundle'.
%% ---------------------------------------------
%% 
%% It may be distributed under the conditions of the LaTeX Project Public
%% License, either version 1.2 of this license or (at your option) any
%% later version.  The latest version of this license is in
%%    http://www.latex-project.org/lppl.txt
%% and version 1.2 or later is part of all distributions of LaTeX
%% version 1999/12/01 or later.
%% 
%% The list of all files belonging to the 'Elsarticle Bundle' is
%% given in the file `manifest.txt'.
%% 
%% Template article for Elsevier's document class `elsarticle'
%% with harvard style bibliographic references

%\documentclass[preprint,12pt,authoryear]{elsarticle}

%% Use the option review to obtain double line spacing
%% \documentclass[authoryear,preprint,review,12pt]{elsarticle}

%% Use the options 1p,twocolumn; 3p; 3p,twocolumn; 5p; or 5p,twocolumn
%% for a journal layout:
%% \documentclass[final,1p,times,authoryear]{elsarticle}
%% \documentclass[final,1p,times,twocolumn,authoryear]{elsarticle}
%% \documentclass[final,3p,times,authoryear]{elsarticle}
%% \documentclass[final,3p,times,twocolumn,authoryear]{elsarticle}
%% \documentclass[final,5p,times,authoryear]{elsarticle}
 \documentclass[final,5p,times,twocolumn,authoryear]{elsarticle}
 %\documentclass[preprint,12pt,authoryear]{elsarticle}

%% For including figures, graphicx.sty has been loaded in
%% elsarticle.cls. If you prefer to use the old commands
%% please give \usepackage{epsfig}

%% The amssymb package provides various useful mathematical symbols
\usepackage{amssymb}
%% The amsthm package provides extended theorem environments
%% \usepackage{amsthm}

%% The lineno packages adds line numbers. Start line numbering with
%% \begin{linenumbers}, end it with \end{linenumbers}. Or switch it on
%% for the whole article with \linenumbers.
%% \usepackage{lineno}
%%reference->->->blue---------------------------------------------------------

%%my add----------------------------------
\usepackage{amsmath,amssymb,amsfonts}
\usepackage[justification=centering]{caption}
\usepackage{subfigure}
\usepackage{verbatim}
\usepackage{hyperref}
\usepackage{wrapfig}
\hypersetup{
	colorlinks=true,
	linkcolor=blue,
	filecolor=blue,      
	urlcolor=blue,
	citecolor=cyan,
}

\usepackage{makecell}
\usepackage{multirow}

\newdefinition{assumption}{ {Assumption}}
\newtheorem{theorem}{ {Theorem}}
\newtheorem{lemma}{ {Lemma}}
\newdefinition{definition}{ {Definition}}
\newdefinition{remark}{ {Remark}}
\newdefinition{example}{ {Example}}
\newproof{proof}{Proof}
\newcommand{\sign}[1]{\mathrm{sgn}(#1)}

\journal{Arxiv }
\begin{document}

\begin{frontmatter}

%% Title, authors and addresses

%% use the tnoteref command within \title for footnotes;
%% use the tnotetext command for theassociated footnote;
%% use the fnref command within \author or \affiliation for footnotes;
%% use the fntext command for theassociated footnote;
%% use the corref command within \author for corresponding author footnotes;
%% use the cortext command for theassociated footnote;
%% use the ead command for the email address,
%% and the form \ead[url] for the home page:
%% \title{Title\tnoteref{label1}}
%% \tnotetext[label1]{}
%% \author{Name\corref{cor1}\fnref{label2}}
%% \ead{email address}
%% \ead[url]{home page}
%% \fntext[label2]{}
%% \cortext[cor1]{}
%% \affiliation{organization={},
%%            addressline={}, 
%%            city={},
%%            postcode={}, 
%%            state={},
%%            country={}}
%% \fntext[label3]{}

\title{Distributed online optimization for heterogeneous linear multi-agent systems with coupled constraints \tnoteref{t1}}

%National Key R\&D Program of China, No. 2018YFE0105000, 2018YFB1305304, 
%% use optional labels to link authors explicitly to addresses:
%% \author[label1,label2]{}
%% \affiliation[label1]{organization={},
%%             addressline={},
%%             city={},
%%             postcode={},
%%             state={},
%%             country={}}
%%
%% \affiliation[label2]{organization={},
%%             addressline={},
%%             city={},
%%             postcode={},
%%             state={},
%%             country={}}

\author[1,2]{Yang Yu}
\ead{1910639@tongji.edu.cn}
\author[1,2,3]{Xiuxian Li\corref{cor1}}
\ead{xli@tongji.edu.cn}
\author[1,2,3]{Li Li}
\ead{lili@tongji.edu.cn}
\author[4]{Lihua Xie
	}
\ead{elhxie@ntu.edu.sg}

\cortext[cor1]{Corresponding author.}
\address[1]{Department of Control Science and Engineering, College of Electronics and Information Engineering, Tongji University, Shanghai 201804, China}
\address[2]{Shanghai Research Institute for Intelligent Autonomous Systems, Shanghai, 201210, China}
\address[3]{Shanghai Institute of Intelligent Science and Technology, Tongji University, Shanghai 201804, China}
\address[4]{School of Electrical and Electronic Engineering, Nanyang Technological University, Singapore 639798}

%\author[tj,jt,tj,ny]{li li, yang yu, xiuxian li, lihua xie}

\begin{abstract}
%% Text of abstract
This paper studies a class of distributed online convex optimization problems for heterogeneous linear multi-agent systems. Agents in a network, knowing only their own outputs, need to minimize the time-varying costs through neighboring interaction subject to time-varying coupled inequality constraints. Based on the saddle-point technique, we design a  continuous-time distributed controller which is shown to achieve constant regret bound and sublinear fit bound, matching those of the standard centralized online method. We further extend the control law to the event-triggered communication mechanism and show that the constant regret bound and sublinear fit bound are still achieved while reducing the communication frequency. Additionally, we study the situation of communication noise, i.e., the agent's measurement of the relative states of its neighbors is disturbed by a noise. It is shown that, if the noise is not excessive, the regret and fit bounds are unaffected, which indicates the  controller's noise-tolerance capability to some extent. Finally,  a numerical simulation is provided to support the theoretical conclusions.
\end{abstract}

%%Graphical abstract
%\begin{graphicalabstract}
%\includegraphics{grabs}
%\end{graphicalabstract}

%%Research highlights
%\begin{highlights}
%\item Research highlight 1
%\item Research highlight 2
%\end{highlights}

\begin{keyword}
%% keywords here, in the form: keyword \sep keyword
 Online convex optimization \sep multi-agent systems \sep event-triggered communication \sep time-varying constraints \sep noisy measurement
%% PACS codes here, in the form: \PACS code \sep code

%% MSC codes here, in the form: \MSC code \sep code
%% or \MSC[2008] code \sep code (2000 is the default)

\end{keyword}

\end{frontmatter}

%% \linenumbers

%% main text
\section{Introduction}\label{int}
	With the increase of data scale and the development of network technology, the problems of accomplishing global tasks collaboratively through multiple nodes has attracted more and more attention, and distributed optimization is one of such problems \citep{nedic2018,yang2019}. Compared with optimization via a single-node, distributed optimization enables each computing node to enjoy the computing power of the entire network at the cost of additional network communication. In addition, distributed optimization can effectively avoid single-point failures in centralized computation and improve performance bottlenecks \citep{chen2012}. Distributed optimization is widely used in power systems \citep{anoh2020}, industrial smart manufacturing \citep{mao2022}, robot formation \citep{sun2022}, etc.
	
	\begin{table*}
		\small
		\caption{Comparison of the algorithm proposed in this paper to related works measured by static regret and fit}
		\label{tab_1}
			\begin{tabular*}{\linewidth}{@{}c|c|c|c|l@{}}
				\hline
				Reference & Controller type & Problem type & Constraint type & Regret and fit\\
				\hline
				\citet{pater2017} & Continuous & Centralized & Feasible set constraints & $\mathcal{R}^T=\mathcal{O}(1)$\\
				\hline
				\citet{pater2020} & Continuous & Distributed & Uncoupled inequality constraints &\makecell[l]{$\mathcal{R}^T=\mathcal{O}(\sqrt{T})$,$\mathcal{F}^T=\mathcal{O}(\sqrt{T})$} \\
				\hline
				\citet{yi2020} & Discrete & Distributed & Coupled inequality constraints & \makecell[l]{$\mathcal{R}^T=\mathcal{O}(T^{\max\{\kappa,1-\kappa\}})$, $\mathcal{F}^T = \mathcal{O}(T^{\max\{\kappa,1-\kappa\}})$\\ for $\kappa\in(0,1)$}\\
				\hline
				\citet{li2022} & Discrete & Distributed & Coupled inequality constraints & \makecell[l]{$\mathcal{R}^T= \mathcal{O}(T^{\max\{\kappa,1-\kappa\}})$, $\mathcal{F}^T = \mathcal{O}(T^{\max\{\frac{1}{2}+\frac{\kappa}{2},1-\frac{\kappa}{2}\}})$\\ for $\kappa\in(0,1)$} \\
				\hline
				This paper & Continuous &Distributed & Coupled inequality constraints &\makecell[l]{ $\mathcal{R}^T=\mathcal{O}(1)$,$\mathcal{F}^T=\mathcal{O}(\sqrt{T})$}\\
				\hline
			\end{tabular*}
	\end{table*}
	
	Due to the complexity and uncertainty of the environment, optimization goals and constraints are usually time-varying, which make it difficult to formulate a comprehensive theoretical model and use classical algorithmic theory to optimize. The online optimization considered in this paper optimizes by making decisions while learning, and dynamically adjusting the strategy \citep{ss2011}. 
	%In online optimization, online agents make decisions iteratively. At the time of each decision, the consequences associated with the choices are unknown to the agents themselves. After the decisions are made, there will be some errors with the optimal solution. Within a certain period of time $[0,T]$, the accumulated error, called regret, becomes a measure of the online optimization problems \citep{vap1995,zink2003,mcm2017}. 
	In the discrete-time setting, distributed online optimization problem with global feasible set constraints was studied in \citet{yan2013} and the proposed projection-based online subgradient descent algorithm achieves $\mathcal{O}(\sqrt{T})$ static regret bound, which matches the bound of the centralized algorithm \citep{zink2003}. Further, \citet{kop2015} studied the problem with local feasible set constraints and obtained the same bound.
	For the distributed online optimization problem with local inequality constraints, a consensus-based primal-dual subgradient algorithm was proposed in \citet{yuan2018} which achieves sublinear regret bound. In order to address the time-varying coupling constraints, \citet{yi2020} developed a unique decentralized dynamic online  mirror descent algorithm to achieve sublinear dynamic regret. In the continuous-time setting, an online projected gradient controller was designed in \citet{pater2017}, which achieves the constant regret bound independent of $T$. And this algorithm was extended in \citet{pater2020} to the case of multi-agent systems with $\mathcal{O}(\sqrt{T})$ regret bound. Moreover, distributed online optimization problems have also been studied in the face of various complexities such as communication noise \citep{cao2021}, signal transmission delay \citep{cao2022},  unavailable gradients \citep{yi20212}, etc., and more details can be referenced in a recent survey \citep{li2022sur}.

	Most of the above works concentrate on online optimization in the absence of system dynamics, which can be viewed as information layer problems. However, agents tend to have their complex physical dynamics, such as Euler–Lagrangian (EL) dynamics for mobile robots and quadrotors \citep{ ray2021}. So the combination of online optimization and dynamics should be analyzed and controlled properly as a physical layer problem. 
	For high-order multi-agent systems, a distributed Proportional-Integral (PI) controller was designed in \citet{deng2016}  to deal with the distributed online optimization problem. There are also some centralized algorithms that consider double integrators \citep{ll2021} and linear systems \citep{col2020,non2021}.
	The distributed setting for online optimization algorithm with linear dynamics remains an open problem.
	The following are the contributions of this paper.
	\begin{itemize}
		\item This study investigates the distributed online optimization of heterogeneous multi-agent systems with time-varying coupled inequality constraints for the first time, in contrast to the centralized online optimization algorithms for linear systems \citep{col2020,non2021}. 
		Agents can reach constant regret bound and $\mathcal{O}(\sqrt{T})$ fit bound by  the designed continuous controllers. In comparison, majority of existing distributed online optimization algorithms \citep{yi2020,li2022} with coupled inequality constraints only achieve inferior sublinear regret bounds.
		
		\item This study provides an event-triggered approach based on the continuous-time online optimization controller  to reduce the bandwidth consumption. The constant regret bound and $\mathcal{O}(\sqrt{T})$ fit bound are still reached in the case of event-triggered communication.
		\item Furthermore, the impact of communication noise on the systems is studied. In this case, the constant regret bound and $\mathcal{O}(\sqrt{T})$ fit bound are still guaranteed if the noise is not excessive, demonstrating that the  controller has a certain tolerance for noise.
	\end{itemize}
	
	A preliminary version of the paper was presented at a conference \citep{yy2022b}. The current paper extends it in the following aspects. First, this paper provides detailed proofs for the main results which are not provided in \citet{yy2022b}.  Second, the algorithm is further extended here to handle the identical output case. Third, this paper also studies the scenario with communication noise, which is not addressed in \citet{yy2022b}. Finally, compared with \citet{yy2022b}, 
	 a improved numerical simulation is presented here.
	
	The rest of the paper is structured as follows. Section \ref{pre} contains preliminaries. Following that, we describe the heterogeneous linear systems under study, define the online convex optimization problem and give some supportive lemmas in Section \ref{pro}. In Section \ref{mai}, the continuous and event-triggered control laws are developed and the influence of noise signals is studied.  A numerical example is provided in Section \ref{sim} and the conclusion is covered in Section \ref{res}.

\section{Preliminaries}\label{pre}

\subsection{Notations}
	$\mathbb{R}$, $\mathbb{N}_+$, $\mathbb{R}^n$, $\mathbb{R}_+^n$, $\mathbb{R}^{m\times n}$ stand for the set of real numbers, positive integers, $n$-dimensional real vectors, $n$-dimensional non-negative real vectors, and $m\times n$-dimensional real matrices, respectively. $\boldsymbol0_n$ (resp. $\boldsymbol1_n$) denotes the all-zero (resp. all-one) column  vector  of dimension $n$ and $I_n$ denotes the $n$-dimensional identity matrix. The Euclidean norm (resp. 1-norm) of vector $x$ is denoted by $\|x\|$ (resp. $\|x\|_1$). $[n]$ denotes the set $\{1,\dots,n\}$ for any $n\in\mathbb{N}_+$. $col(x_1,\dots,x_n)$ represents a column vector by stacking vectors $x_i$, $i\in[n]$. $diag(A_1,\dots,A_n)$ denotes a block diagonal matrix with diagonal blocks of $A_1$, $\dots$, $A_n$. The Kronecker product of matrices $A$ and $B$ is denoted by $A\otimes B$.
	$P_\mathcal{S}(x)$ denotes the Euclidean projection of a vector $x\in \mathbb{R}^n$ onto the set $\mathcal{S} \subseteq \mathbb{R}^n$, i.e., 
	$P_\mathcal{S}(x)=argmin_{v\in\mathcal{S}}\|x-v\|^2$.
	For simplicity, denote $P_{\mathbb{R}_+^n}(\cdot)$ as $[\cdot]_+$.	
	 Define the set-valued sign function $\mathrm{sgn}(\cdot)$ as follows:
	\begin{align*}
		\sign{x}:=\partial\|x\|_1=
		\begin{cases}
			-1,&\mathrm{if}\  x<0,\\
			[-1,1],&\mathrm{if}\  x=0,\\
			1,&\mathrm{if}\  x>0.
		\end{cases}
	\end{align*}

\subsection{Graph Theory}
	An undirected graph $\mathcal{G} = (\mathcal{V}, \mathcal{E}, \mathcal{A})$ is used to describe the communication network of a multi-agent system with $N$ agents, where $\mathcal{V} = \{v_1, \dots, v_N\}$ is a node set. $\mathcal{E} \in \mathcal{V} \times \mathcal{V}$ denotes the edge set. If the information can be exchanged between $v_j$ and $v_i$, then $(v_j,v_i) \in \mathcal{E}$ with $a_{ij}=1$ denoting its weight, otherwise  $a_{ij}=0$. $\mathcal{A}= [a_{ij}]\in \mathbb{R}^{N \times N}$ is the adjacency matrix. $\mathcal{G}$ is connected if there exists a path from any node to any other node in $\mathcal{V}$.
	
	%A standard assumption is made here.
	\begin{assumption} \label{asp1}
		The communication graph $\mathcal{G}$ is undirected and connected.	
	\end{assumption}

\section{Problem Formulation}\label{pro}

\subsection{System Dynamics}

	Consider a multi-agent system consisting of $N$ heterogeneous agents. The dynamic of each agent $i\in\mathcal{V}$ is modeled by the following linear system:
	\begin{align} 
		\begin{split}
			{\dot x}_i&=A_i x_i + B_i u_i,\\
			y_i&=C_i x_i,\label{eqsys}
		\end{split}
	\end{align}
	where $x_i\in\mathbb{R}^{n_i}$, $u_i\in\mathbb{R}^{m_i}$ and $y_i\in\mathbb{R}^{p_i}$ are the state, control input and output variables, respectively. $A_i\in\mathbb{R}^{n_i \times n_i}$, $B_i\in\mathbb{R}^{n_i \times m_i}$ and $C_i\in\mathbb{R}^{p_i \times n_i}$ are the constant state, input and output matrices, respectively.
	
		\begin{assumption} \label{asp4}
		$(A_i,B_i)$ is controllable, $(A_i,C_i)$ is detectable, and
		\begin{align}
			rank \begin{bmatrix} C_iB_i &\boldsymbol{0}_{p_i\times m_i}\\ -A_iB_i &B_i \end{bmatrix}=n_i+p_i, i\in[N].\label{rank1}
		\end{align}
	\end{assumption}
	
	\begin{lemma} [\citet{lzh2020}]\label{lemma3}
		Under Assumption \ref{asp4}, the matrix equations
		\begin{subequations}\label{eqle3}
			\begin{align}
				&B_i\Gamma_i-\Psi_i=\boldsymbol{0},\label{eqle31}\\
				&B_i\Upsilon_i-A_i\Psi_i=\boldsymbol{0},\label{eqle32}\\
				&C_i\Psi_i-I=\boldsymbol{0},\label{eqle33}
			\end{align}
		\end{subequations}
		have solutions $\Gamma_i$, $\Psi_i$, and $\Upsilon_i$.
	\end{lemma}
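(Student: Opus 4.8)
The plan is to reduce the three matrix equations \eqref{eqle31}--\eqref{eqle33} to a single linear matrix equation whose coefficient matrix is exactly the block matrix in the rank condition \eqref{rank1}, and then to invoke surjectivity of a full–row–rank linear map. First I would pin down the dimensions: since $C_i\in\mathbb{R}^{p_i\times n_i}$, the identity in \eqref{eqle33} must be $I_{p_i}$, so the unknowns are forced to be $\Psi_i\in\mathbb{R}^{n_i\times p_i}$, $\Gamma_i\in\mathbb{R}^{m_i\times p_i}$ and $\Upsilon_i\in\mathbb{R}^{m_i\times p_i}$. Using \eqref{eqle31} to eliminate $\Psi_i=B_i\Gamma_i$, equations \eqref{eqle32}--\eqref{eqle33} become $C_iB_i\Gamma_i=I_{p_i}$ and $B_i\Upsilon_i-A_iB_i\Gamma_i=\boldsymbol{0}$, which I would write compactly as
\begin{align*}
	\begin{bmatrix} C_iB_i &\boldsymbol{0}_{p_i\times m_i}\\ -A_iB_i &B_i \end{bmatrix}\begin{bmatrix}\Gamma_i\\ \Upsilon_i\end{bmatrix}=\begin{bmatrix} I_{p_i}\\ \boldsymbol{0}_{n_i\times p_i}\end{bmatrix}.
\end{align*}

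Next I would observe that the coefficient matrix on the left is precisely the matrix in \eqref{rank1}, which has full row rank $n_i+p_i$ by Assumption \ref{asp4}; hence the associated linear map from $\mathbb{R}^{2m_i}$ to $\mathbb{R}^{n_i+p_i}$ is surjective, so the displayed equation is solvable. Concretely, one may solve it column by column: for each $k\in[p_i]$ surjectivity furnishes a pair $(\gamma_k,\upsilon_k)\in\mathbb{R}^{m_i}\times\mathbb{R}^{m_i}$ mapping to $col(e_k,\boldsymbol{0}_{n_i})$, where $e_k$ is the $k$-th standard basis vector of $\mathbb{R}^{p_i}$; stacking these columns yields $\Gamma_i=[\gamma_1,\dots,\gamma_{p_i}]$ and $\Upsilon_i=[\upsilon_1,\dots,\upsilon_{p_i}]$. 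Finally I would set $\Psi_i:=B_i\Gamma_i$, so that \eqref{eqle31} holds by construction, while \eqref{eqle32} and \eqref{eqle33} are exactly the two block rows of the displayed identity.

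Because the construction relies only on the rank condition \eqref{rank1}, the controllability of $(A_i,B_i)$ and the detectability of $(A_i,C_i)$ in Assumption \ref{asp4} are not needed for this lemma (they will instead be used later, for the stabilizing observer/controller design). The argument is therefore essentially bookkeeping, and I do not anticipate a genuine obstacle; the only point requiring care is matching dimensions — in particular recognizing that the identity in \eqref{eqle33} is $I_{p_i}$ rather than $I_{n_i}$ — and recognizing the block structure that matches \eqref{rank1}.
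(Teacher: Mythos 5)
Your argument is correct: eliminating $\Psi_i=B_i\Gamma_i$ turns \eqref{eqle32}--\eqref{eqle33} into a linear system whose coefficient matrix is exactly the block matrix in \eqref{rank1}, and full row rank $n_i+p_i$ gives surjectivity and hence solvability, with $\Psi_i:=B_i\Gamma_i$ then satisfying \eqref{eqle31} by construction. The paper itself offers no proof --- it simply cites \citet{lzh2020} --- but your reduction is the standard argument behind that result, and your observation that only the rank condition (not controllability or detectability) is needed here is accurate.
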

	
	\begin{remark}
		 The controllability and detectability in Assumption \ref{asp4} are quite standard in dealing with the problem for linear systems, and the assumption of matrix rank (\ref{rank1}) is commonly used in the design of optimizers for heterogeneous linear systems \citep{lzh2020, yu2021a}.
	\end{remark}

\subsection{Optimization Goal}
	Each agent $i$ has a local output variable $y_i$ which is limited to an output set $\mathcal{Y}_i \subseteq \mathbb{R}^{p_i}$, i.e., $y_i(t)\in\mathcal{Y}_i$.
	$f_i(t,\cdot): \mathcal{Y}_i\to \mathbb{R}$ is its local cost function
	%Each agent $i$ has a  local cost function $f_i(t,y_i)$, where $f_i(t,\cdot): \mathbb{R}^{p_i}\to \mathbb{R}$, $y_i$ is its output and limited to an output set $\mathcal{Y}_i \subseteq \mathbb{R}^{p_i}$, i.e., $y_i(t)\in\mathcal{Y}_i$. Meanwhile, 
	and $g_i(t,\cdot): \mathcal{Y}_i\to \mathbb{R}^{q}$ is its private constraint function. 
	%The time-varying global cost function $f(t,\cdot)$ is the sum of the local cost functions over the entire network, i.e.,
	%\begin{align}
	%	f(t,y):=\sum_{i=1}^{N}f_{i}(t, y_i),
	%\end{align}
	%where $y:=col(y_1,\dots, y_N)\in\mathcal{Y}$ with  $\mathcal{Y}:=\mathcal{Y}_1\times\dots \times \mathcal{Y}_N \subseteq \mathbb{R}^{p}$ and $p:=\sum_{i=1}^{N}p_i$.	
	For simplicity, define $p:=\sum_{i=1}^{N}p_i$, $\mathcal{Y}:=\mathcal{Y}_1\times\dots \times \mathcal{Y}_N \subseteq \mathbb{R}^{p}$, $y:=col(y_1,\dots, y_N)\in\mathcal{Y}$, $f(t,y):=\sum_{i=1}^{N}f_{i}(t, y_i)$, and 
	$\bar g(t,y):=\sum_{i=1}^N g_{i}(t, y_i)$.
	
	This study aims to develop a distributed control law $u_i(t)$ for each agent to minimize the global cost function over a period of time $[0,T]$ with time-varying coupled inequality constraints:
	\begin{align}
		\begin{split}
			&\min_{y \in \mathcal{Y}} \int_{0}^{T}f(t,y)\,dt ,\\
			&s.t.~\bar g(t,y)\le \boldsymbol0.\label{question2}
		\end{split}
	\end{align}

		%The following standing assumptions are made on the cost and constraint functions.

	\begin{assumption} \label{asp2}
		Each set $\mathcal{Y}_i$ is convex and compact for all $i\in[N]$. For $t\in [0,T]$, functions $f_i(t,y_i)$ and $g_i(t,y_i)$ are  integrable with respect to $t$, convex with respect to $y_i$,  and Lipschitz continuous over $\mathcal{Y}_i$. The boundedness of $f_i$ and $g_i$ can be inferred, i.e., there exist constants $K_f>0$ and $K_g>0$ such that $|f_i(t,y_i)|\le K_f$ and $\|g_i(t,y_i)\|\le K_g$.
	\end{assumption}
	
	\begin{assumption}\label{asp2a}
		The local cost function $f_i(t,y_i)$ is $l$-strongly convex.
		%Let $\underline m := \min\{m_1, \dots, m_N\}$.
	\end{assumption}
	
	\begin{assumption}\label{asp2e}
		The set of feasible outputs $\mathcal{Y}^\dagger:=\{y: y\in\mathcal{Y}, \sum_{i=1}^{N}g_{i}(t, y_i)\le 0, t\in[0,T]\}$ is non-empty.
	\end{assumption}

	\begin{remark}
		 Assumption \ref{asp2} is reasonable since the outputs like voltage frequently have a certain range in practice. The cost functions and constraints are not required to be differentiable, which can be dealt with by using subgradients. Assumption \ref{asp2a} and \ref{asp2e} are standard \citep{yi2020,pater2020}.
	\end{remark}

\subsection{ Performance Metrics}
	Two performance indicators, network regret and network fit, are defined to evaluate the cost performance of such output trajectories. Regret is defined as
	\begin{align}
		\mathcal{R}^T:=\int_{0}^{T} \Big(f(t,y(t))-f(t,y^*)\Big)\,dt\label{regert},
	\end{align}
	where $y^*=(y_1^*,\dots,y_N^*)\in \mathcal{Y}$ represents the offline response to problem (\ref{question2}) obtained by the omniscient agent who knows all information about all agents in the period of $[0, T]$, i.e.,
	\begin{align}
		\begin{split}
		&y^*=\underset{y\in\mathcal{Y}}{\arg\min} \int_{0}^{T}f(t,y)\,dt ,\\
		&s.t. ~\bar g(t,y)\le \boldsymbol0.
		\end{split}
	\end{align}
	%$y^*$ is the optimal output, while in reality agents do not know the future information and can only communicate with their neighbors, so there is a gap between the decision made and the optimal decision.
	In general, if the regret is sublinear concerning $T$, the proposed method is deemed ``good" \citep{li2022sur}.

	Fit is defined as the projection of the cumulative constraints onto the nonnegative orthant:
	\begin{align}
		\mathcal{F}^T:=\left\|\,\left[\int_{0}^{T} \sum_{i=1}^{N}g_{i}(t, y_i) \,dt\right]_+\right\|,\label{fit}
	\end{align}
	which assesses how well the output trajectories $y(t)$ conform to the constraints.
	This definition implies that strictly viable decisions are permitted to make up for the constraints that are occasionally violated. When the outputs can be stored, like average power, then this is appropriate \citep{pater2017}.
	Similarly, the algorithm is deemed ``good" if the fit is sublinear concerning T.
	
	By $g_i(t,\cdot): \mathbb{R}^{p_i}\to \mathbb{R}^{q}$, one can define $g_{i,j}(t,\cdot): \mathbb{R}^{p_i} \to \mathbb{R}$ as the $j$th component of $g_i(t,\cdot)$, i.e., $g_i(t,\cdot) = col\left(g_{i,1}(t,\cdot), \dots, g_{i,q}(t,\cdot)\right)$. Further, define $F_j^T:=\int_{0}^{T} \sum_{i=1}^{N}g_{i,j}(t, y_i)\,dt, j=1,\dots,q$ as the $j$th component of the constraint integral. It naturally follows that %$\mathcal{F}^T=\sqrt{\sum_{j=1}^q\left[F_j^T\right]_+^2}$.
	\begin{align}
		\mathcal{F}^T=\sqrt{\sum_{j=1}^q\left[F_j^T\right]_+^2}.
	\end{align}
	%where $\mathcal{J}=\{j\,|F_j^T>0\}$.

	To analyze the projected dynamical system, a differentiated projection operator is defined as follows.
	
	\begin{definition} [\citep{zhang1995}]
		Let $\mathcal{S} \subseteq \mathbb{R}^n$ be a closed convex set. Then, for any $x \in \mathcal{S}$ and $v\in\mathbb{R}^n$, the projection of $v$ over set $\mathcal{S}$ at the point $x$ can be defined as
		\begin{align}
			\Pi_\mathcal{S} [x,v] = \lim\limits_{\xi\to0^+} \frac{P_\mathcal{S}(x+\xi v)-x}{\xi}.\notag
		\end{align}
	\end{definition}

	\begin{lemma} [\citep{pater2017}]
		Let $\mathcal{S} \subseteq \mathbb{R}^n$ be a closed convex set and $x_1, x_2\in \mathcal{S}$, then one has
		\begin{align}
			(x_1-x_2)^\top \Pi_{\mathcal{S}}(x_1,v)\le (x_1-x_2)^\top v, \forall v\in \mathbb{R}^n.\label{eqle2}
		\end{align}
	\end{lemma}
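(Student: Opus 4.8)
The plan is to reduce the claim to the standard variational characterization of the Euclidean projection onto a closed convex set: for every $z\in\mathbb{R}^n$ and every $w\in\mathcal{S}$, the projection $P_\mathcal{S}(z)$ satisfies $(z-P_\mathcal{S}(z))^\top(w-P_\mathcal{S}(z))\le 0$. This is the only auxiliary fact I would invoke, and it is elementary (it is the first-order optimality condition for $\min_{v\in\mathcal{S}}\|z-v\|^2$ over the convex set $\mathcal{S}$).

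First I would fix $\xi>0$ and abbreviate $x_\xi:=P_\mathcal{S}(x_1+\xi v)$. Applying the variational inequality with $z=x_1+\xi v$ and the test point $w=x_2\in\mathcal{S}$ gives $(x_1+\xi v-x_\xi)^\top(x_2-x_\xi)\le 0$, and applying it again with the same $z$ but the test point $w=x_1\in\mathcal{S}$ gives $(x_1+\xi v-x_\xi)^\top(x_1-x_\xi)\le 0$. Subtracting the second inequality from the first cancels the quadratic term in $x_\xi$ and leaves $(x_1+\xi v-x_\xi)^\top(x_2-x_1)\le 0$, which rearranges to $(x_1-x_2)^\top(x_\xi-x_1)\le \xi\,(x_1-x_2)^\top v$.

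Dividing by $\xi>0$ yields $(x_1-x_2)^\top\big((x_\xi-x_1)/\xi\big)\le (x_1-x_2)^\top v$ for every $\xi>0$. Letting $\xi\to0^+$, the difference quotient $(x_\xi-x_1)/\xi$ converges to $\Pi_\mathcal{S}[x_1,v]$ by the definition of the projection-derivative operator (its existence being part of that definition, and consistent with the nonexpansiveness $\|P_\mathcal{S}(a)-P_\mathcal{S}(b)\|\le\|a-b\|$), while the right-hand side does not depend on $\xi$. Since a non-strict inequality is preserved under limits, this gives $(x_1-x_2)^\top\Pi_\mathcal{S}[x_1,v]\le (x_1-x_2)^\top v$, as claimed.

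The computation itself is a two-line subtraction of variational inequalities, so the only delicate point is the passage to the limit — namely that $(x_\xi-x_1)/\xi$ genuinely converges so that $\Pi_\mathcal{S}[x_1,v]$ is well defined. I would not reprove this, since it is exactly the content of the projection-derivative definition already quoted in the excerpt; I would simply cite it and let the non-strict inequality pass through the limit.
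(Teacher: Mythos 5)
The paper does not prove this lemma itself (it is quoted from \citet{pater2017}), so the only question is whether your argument is sound, and unfortunately it is not: the step ``subtracting the second inequality from the first'' is invalid. From $a\le 0$ and $b\le 0$ you cannot conclude $a-b\le 0$, yet that is exactly what you do with $a=(x_1+\xi v-x_\xi)^\top(x_2-x_\xi)$ and $b=(x_1+\xi v-x_\xi)^\top(x_1-x_\xi)$. The resulting intermediate claim is not merely unproved but actually false: the finite-$\xi$ inequality $(x_1-x_2)^\top\bigl((x_\xi-x_1)/\xi\bigr)\le (x_1-x_2)^\top v$ fails in general. For instance, take $\mathcal{S}=[0,1]\subset\mathbb{R}$, $x_1=0.95$, $x_2=1$, $v=1$, $\xi=0.1$; then $x_\xi=P_{[0,1]}(1.05)=1$, the left-hand side equals $-0.025$ and the right-hand side equals $-0.05$, so the inequality is violated. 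The lemma is only true in the limit $\xi\to 0^+$, and your argument has no mechanism that distinguishes the limit from finite $\xi$.

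The repair is to use only the variational inequality at the test point $x_2$ and to control the cross term explicitly. Writing $z=x_1+\xi v$ and decomposing
\begin{align}
(x_1-x_2)^\top(x_\xi-z)=(x_\xi-x_2)^\top(x_\xi-z)+(x_1-x_\xi)^\top(x_\xi-z),\notag
\end{align}
the first term is $\le 0$ by the variational inequality with $w=x_2$, while the second equals $-\|x_1-x_\xi\|^2-\xi(x_1-x_\xi)^\top v\le \xi\|x_1-x_\xi\|\,\|v\|\le \xi^2\|v\|^2$ by nonexpansiveness of $P_\mathcal{S}$ (since $x_1=P_\mathcal{S}(x_1)$). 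Hence $(x_1-x_2)^\top\bigl((x_\xi-x_1)/\xi\bigr)\le (x_1-x_2)^\top v+\xi\|v\|^2$, and letting $\xi\to 0^+$ gives the claim; the error term of order $\xi$ is precisely what your subtraction trick was implicitly (and incorrectly) discarding. Alternatively, one can invoke the Moreau-type decomposition $\Pi_\mathcal{S}[x_1,v]=v-n$ with $n$ in the normal cone $N_\mathcal{S}(x_1)$, so that $(x_1-x_2)^\top n\ge 0$ for $x_2\in\mathcal{S}$ and the inequality follows in one line. Your overall strategy (variational inequality plus passage to the limit) is the right one, but as written the proof does not go through.
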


	%\begin{lemma} [\cite{pater2017,zhang1995}]\label{lemma1}
	%	Let $\mathcal{S} \subseteq \mathbb{R}^n$ be a convex set and let $x, y\in \mathcal{S}$, then
	%	\begin{itemize}
	%		\item[1)] $(x-y)^\top \Pi_{\mathcal{S}}[x,v]\le (x-y)^\top v, \forall v\in \mathbb{R}^n$;
	%		\item[2)] $\|\Pi_{\mathcal{S}}[x,v]\|\le\|v\|, \forall v\in \mathbb{R}^n$.
	%	\end{itemize}
	%\end{lemma}

\section{Main Results}\label{mai}
	
	 In this section, we firstly design a controller under continuous-time settings and analyze the convergence performance of the closed-loop system. Then, an event-triggered mechanism is introduced to adapt to practical scenarios by discretizing communication and reducing communication load. Finally, the impact of communication noise on the controller's performance is considered. 
	
	\subsection{Continuous Communication}
	
	Firstly,  for the constrained optimization problem, one can construct the time-varying Lagrangian for agent $i$ as
	\begin{align}
		\mathcal{L}_i(t,y_i,\mu_i)=f_i(t,y_i)+\mu_i^\top g_i(t,y_i) - K_\mu h_i,\label{la}
	\end{align}
	where $\mu_i\in\mathbb{R}_+^{q}$ is the local Lagrange multiplier for agent $i$, $K_\mu>0$ is the preset parameter and $h_i:= \frac{1}{2}\sum_{j=1}^N a_{ij}\left\|\mu_i-\mu_j\right\|_1$ is a metric of $\mu_i$'s disagreement {\citep{liang2018}}.  
	
	For simplicity, define
	\begin{align}
		\mathcal{L}(t,y,\mu):= f(t,y)+\mu^\top g(t,y) - K_\mu h(\mu),\label{lag}
	\end{align}
	where $g(t,y)=col(g_1(t,y_1), \dots, g_N(t,y_N))$, $\mu=col(\mu_1, \dots, \mu_N)$, and $h(\mu):=\sum_{i=1}^N h_i(\mu)$. It can be easily verified that  $\mathcal{L}(t,y,\mu)=\sum_{i=1}^{N}\mathcal{L}_i(t,y_i,\mu_i)$.
	
	Given that $f_i(t,\cdot)$, $g_i(t,\cdot)$ are convex and $\mu_i \ge \boldsymbol 0_q$,
	the Lagrangian is convex with respect to $y_i$. Let us
	denote by $\partial_{y_i}\mathcal{L}(t,y,\mu)$ a subgradient of $\mathcal{L}$ with respect to $y_i$, i.e.,
	\begin{align}
		\partial_{y_i}\mathcal{L}(t,y,\mu)\in \partial f_i(t,y_i)+\mu_i^\top \partial g_i(t,y_i).\label{lax}
	\end{align}

	Similarly, the Lagrangian is concave with respect to $\mu_i$ and its subgradient is given by
	\begin{align}
		\partial_{\mu_i}\mathcal{L}(t,y,\mu)&\in g_i(t,y_i) - K_\mu\sum_{j=1}^N a_{ij}\sign{\mu_i-\mu_j}.\label{lamu}
	\end{align}

	Design the following distributed closed-loop observer
	\begin{subequations}\label{ob1}
		\begin{align}
			&\dot{\hat{x}}_i=A_i \hat{x}_i + B_iu_i + H_i(y_i -\hat{y}_i), \label{ob1a}\\
			&\hat{y}_i=C_i\hat{x}_i,\label{ob1b}
		\end{align}
	\end{subequations}
	where $\hat{x}_i$ is an estimate of $x_i$, $H_i$ is the feedback matrix to be determined later, and $\hat{y}_i=C_i\hat{x}_i$ is the estimated output.
	
	 Based on the idea of treating optimization algorithms as feedback controllers \citep{hau2021}, an observer-based controller, following the modified Arrow-Hurwicz algorithm \citep{AH1958} for the $i$th agent, is proposed as
	\begin{subequations}\label{eqpi1}
		\begin{align}
			u_i&=-K_{i} \hat{x}_i+\Gamma_i\dot{\eta}_i - (\Upsilon_i - K_i\Psi_i)\eta_i, \label{eqpi1a}\\
			\dot{\eta}_i&=\Pi_{\mathcal{Y}_i}[y_i, -\varepsilon \partial_{y_i}\mathcal{L}(t,y,\mu)],\label{eqpi1b}\\
			\dot{\mu_i}&=\Pi_{\mathbb{R}_+^q}[\mu_i,\varepsilon \partial_{\mu_i}\mathcal{L}(t,y,\mu)], \label{eqpi1c}
		\end{align}
	\end{subequations}
	where $\varepsilon>0$ is the step size, $\eta_i\in\mathbb{R}^{p_i}$ is the auxiliary states, $K_i$ is the feedback matrix to be determined later, $\Gamma_i$, $\Upsilon_i$ and $\Psi_i$ are feedback matrices that are the solutions of (\ref{eqle3}), and the initial values are set as $\eta_i(0){=}\boldsymbol{0}$ and $\mu_i(0){=}\boldsymbol{0}$.

	Substituting the controller (\ref{eqpi1}) into the system (\ref{eqsys}), the closed-loop dynamics of the whole system is
	%\begin{subequations}\label{eqpi2}
	%	\begin{align}
	%		\dot x_i&{=}(A_i{-}B_i K_{i})(x_i-\Psi_i\eta_i){+}B_i\Gamma_i \dot{\eta}_i,\label{eqpi2a}\\
	%		\dot{\eta}_i&{=}\Pi_{\mathcal{Y}_i}[y_i, -\varepsilon\mathcal{L}_i^{y_i}(t,y_i,\mu_i)],\label{eqpi2b}\\
	%		\dot{\mu_i}&{=}\Pi_{\mathbb{R}_+^q}[\mu_i,\varepsilon\mathcal{L}_i^{\mu_i}(t,y_i,\mu_i)] ,\label{eqpi2c}\\
	%		y_i&{=}C_ix_i.\label{eqpi2d}
	%	\end{align}
	%\end{subequations}
	
	\begin{subequations}\label{eqpi2}
		\begin{align}
			\dot x&{=}Ax-BK\hat x-(A{-}B K)\Psi\eta{+}B\Gamma \dot{\eta}, \label{eqpi2a}\\
			\dot{\eta}&{=}\Pi_{\mathcal{Y}}[y, -\varepsilon \partial_{y}\mathcal{L}(t,y,\mu)],\label{eqpi2b}\\
			\dot{\mu}&{=}\Pi_{\mathbb{R}_+^{Nq}}[\mu,\varepsilon \partial_{\mu}\mathcal{L}(t,y,\mu)] ,\label{eqpi2c}\\
			y&{=}Cx,\label{eqpi2d}
		\end{align}
	\end{subequations}
	where $x {=} col(x_1,\dots,x_N)$, $\hat x {=} col(\hat x_1,\dots,\hat x_N)$, $\eta {=} col(\eta_1,\dots,\eta_N)$, $A {=} diag(A_1,\!\dots\!,A_N)$, $B {=} diag(B_1, \!\dots\!, B_N)$, $C {=} diag(C_1, \!\dots\!, C_N)$, $K = diag(K_{\!1}, \!\dots\!,\! K_{\!N})$, $\Psi {=} diag(\Psi_{\!1}, \!\dots\!,\!\! \Psi_{\!N}),\!$ and $\Gamma{=}diag(\Gamma_{\!1}, \!\dots\!,\! \Gamma_{\!N}\!)$.
	
	\begin{remark}
		Since the algorithms presented in this paper involve the use of sign functions which exhibit discontinuity, the solutions of system (\ref{eqsys}) with designed controllers are considered in the sense of Filippov \citep{dis1988}.
	\end{remark}
	
	For the subsequent analysis, consider the following energy function with any $\tilde{y}\in\mathcal{Y}$ and $\tilde{\mu}\in\mathbb{R}_+^{Nq}$ :
	\begin{align}
		V_{(\tilde{y}, \tilde{\mu})}(x,\mu)=V_1 + V_2 + V_3,\label{v1}
	\end{align}
	where
	\begin{subequations}%\label{v1}
		\begin{align}
			&V_1=\frac{1}{2}\|Cx-\tilde{y}\|^2,\label{v1a}\\
			&V_2=\frac{1}{2}\|\mu-\tilde{\mu}\|^2,\label{v1b}\\
			&V_3=(x-\Psi\eta)^\top P\, (x-\Psi\eta),
		\end{align}
	\end{subequations}
	and $P$ is some positive definite matrix.
	
	The following lemma establishes the relationship between the above energy function and time-varying Larangian (\ref{la}) along the system dynamics (\ref{eqpi1}).

	\begin{lemma}\label{lemma4}
		If Assumptions \ref{asp1}-\ref{asp2e} hold and $\tilde{\mu}:= \boldsymbol1_N \otimes \gamma, \forall \gamma\in \mathbb{R}_+^q$, then for any $T\ge0$ the linear multi-agent system (\ref{eqsys}) under control protocol (\ref{eqpi1}) satisfies
		\begin{align}
			\int_{0}^{T} \Big( \mathcal{L}(t,y(t),\tilde{\mu})-\mathcal{L}(t,\tilde{y},\mu(t)) \Big) \,dt\le\frac{V_{(\tilde{y}, \tilde{\mu})}(x(0), \boldsymbol{0})}{\varepsilon}.\label{l3}
		\end{align}
	\end{lemma}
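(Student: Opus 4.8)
The plan is to differentiate the energy function $V_{(\tilde y,\tilde\mu)}(x(t),\mu(t))$ along the closed-loop trajectories and bound $\dot V$ from above by $-\varepsilon\big(\mathcal{L}(t,y(t),\tilde\mu)-\mathcal{L}(t,\tilde y,\mu(t))\big)$, after which integrating over $[0,T]$ and using $V\ge 0$ together with the prescribed initial conditions $\eta(0)=\boldsymbol0$, $\mu(0)=\boldsymbol0$ yields \eqref{l3}. I would handle the three pieces $V_1,V_2,V_3$ separately. For $V_2=\frac12\|\mu-\tilde\mu\|^2$, since $\tilde\mu=\boldsymbol1_N\otimes\gamma\in\mathbb{R}_+^{Nq}$ and $\mu$ evolves by the projected dynamics \eqref{eqpi2c}, the projection lemma \eqref{eqle2} with $x_1=\mu$, $x_2=\tilde\mu$ gives $\dot V_2=(\mu-\tilde\mu)^\top\dot\mu\le \varepsilon(\mu-\tilde\mu)^\top\partial_\mu\mathcal{L}(t,y,\mu)$; by concavity of $\mathcal{L}$ in $\mu$ this is at most $\varepsilon\big(\mathcal{L}(t,y,\mu)-\mathcal{L}(t,y,\tilde\mu)\big)$. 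Here I expect the disagreement term $-K_\mu h(\mu)$ to be essential: because $\tilde\mu$ is of the consensual form $\boldsymbol1_N\otimes\gamma$, the term $K_\mu\sum_{i,j}a_{ij}\,\mathrm{sgn}(\mu_i-\mu_j)^\top(\mu_i-\gamma)$ arising from the subgradient \eqref{lamu} reassembles into $K_\mu h(\mu)$ by the standard symmetrization over undirected edges, which is exactly what is needed to match $\mathcal{L}(t,y,\tilde\mu)=f(t,y)+\tilde\mu^\top g(t,y)$ (the $h$ term vanishes at a consensual multiplier).

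For $V_1=\frac12\|Cx-\tilde y\|^2=\frac12\|y-\tilde y\|^2$, the subtlety is that $y=Cx$ does not itself obey a simple projected flow; instead one uses the controller structure. The key algebraic fact, coming from Lemma~\ref{lemma3} (equations \eqref{eqle31}--\eqref{eqle33}), is that the combination $x-\Psi\eta$ is the ``tracking error'' and that $C\dot x$ along \eqref{eqpi2a}, after substituting $\dot\eta$ and using $C\Psi=I$, $B\Gamma=\Psi$, reduces modulo the error coordinate to $\dot\eta=\Pi_{\mathcal Y}[y,-\varepsilon\partial_y\mathcal L]$. Concretely I would show $\dot y = \dot\eta + (\text{linear functional of } x-\Psi\eta)$, so that $\dot V_1=(y-\tilde y)^\top\dot\eta + (y-\tilde y)^\top(\text{error term})$; the first part is bounded via \eqref{eqle2} and convexity of $\mathcal L$ in $y$ by $-\varepsilon\big(\mathcal L(t,y,\mu)-\mathcal L(t,\tilde y,\mu)\big)$, and the cross error term is to be absorbed by $\dot V_3$.

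The role of $V_3=(x-\Psi\eta)^\top P(x-\Psi\eta)$ is precisely to dominate the residual cross terms. Writing $e:=x-\Psi\eta$ and computing $\dot e$ from \eqref{eqpi2a}, \eqref{eqle31}--\eqref{eqle33} one gets $\dot e=(A-BK)e$ up to terms that are controlled; choosing $K$ (equivalently the feedback gains $K_i$) so that $A-BK$ is Hurwitz and then picking $P$ as the solution of the corresponding Lyapunov equation makes $\dot V_3\le -c\|e\|^2+(\text{cross terms with }y-\tilde y)$, and a Young's-inequality split lets the $-c\|e\|^2$ soak up the cross contributions from $\dot V_1$ (and any from the $B\Gamma\dot\eta$ coupling). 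Summing, $\dot V\le -\varepsilon\big(\mathcal L(t,y,\tilde\mu)-\mathcal L(t,\tilde y,\mu)\big)$. The main obstacle I anticipate is exactly this last bookkeeping: verifying that the observer/controller gains $H_i,K_i,P$ can be chosen simultaneously so that every leftover coupling between the optimization variables $(y,\mu)$ and the regulation error $e$ is nonpositive after completing the square — in particular keeping track of the $\hat x$ versus $x$ discrepancy introduced by the observer \eqref{ob1} — rather than any single convexity or projection estimate, each of which is routine. Once $\dot V\le-\varepsilon(\mathcal L(t,y,\tilde\mu)-\mathcal L(t,\tilde y,\mu))$ is in hand, integrate from $0$ to $T$, use $V_{(\tilde y,\tilde\mu)}(x(T),\mu(T))\ge 0$, note $x(0)-\Psi\eta(0)=x(0)$ and $\mu(0)=\boldsymbol 0$, divide by $\varepsilon$, and \eqref{l3} follows.
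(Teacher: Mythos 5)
Your proposal follows essentially the same route as the paper's proof in Appendix A: the same decomposition of $V$ into $V_1,V_2,V_3$, the projection inequality (\ref{eqle2}) combined with convexity/concavity of $\mathcal{L}$ for the $V_1$ and $V_2$ terms, the observation that $h$ vanishes at the consensual $\tilde\mu=\boldsymbol1_N\otimes\gamma$, and the absorption of the observer/regulation errors $e_x$ and $x-\Psi\eta$ via a Lyapunov matrix $P$ for the augmented Hurwitz error dynamics. The one detail your sketch glosses over is that the Young's split of the cross term $(Cx-\tilde y)^\top CA_c(x-\Psi\eta)$ leaves a residual $+\varsigma_1\|y-\tilde y\|^2$ that cannot be soaked up by $-c\|e\|^2$; the paper kills it with the $-\tfrac{\varepsilon l}{2}\|y-\tilde y\|^2$ term supplied by the $l$-strong convexity of $f_i$ (Assumption \ref{asp2a}), which is precisely the bookkeeping obstacle you anticipated.
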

	
	\begin{proof}
		See Appendix $A$.  In the proof, we use the nonsmooth penalty function $h_i$ to drive $\mu_i$ to reach consensus. Additionally, by using the convexity of $y_i$ in $\mathcal{L}$ and the concavity of $\mu_i$ in $\mathcal{L}$, the relationship between Lagrangian $\mathcal{L}$ and the energy function $V$ is established.	
	\end{proof}

	We now state the main result about the regret and fit bounds under controller (\ref{eqpi1}) with continuous communication.
	
	\begin{theorem}\label{th1}
		Suppose that Assumptions \ref{asp1}-\ref{asp2e} hold. Let $K_i$, $H_i$, $i\in[N]$ be such that $A_i-B_iK_i$ and $A_i-H_iC_i$ are Hurwitz. Then for any $T\ge0$, under controller (\ref{eqpi1}) with $\varepsilon>0$ and $K_\mu \ge NK_g$, the following regret and fit bounds hold
		\begin{align}
			&\mathcal{R}^T\le \frac{\|y(0)\!-\!y^*\|^2 + x(0)^\top P x(0)}{2\varepsilon};\notag\\
			&\mathcal{F}^T {\le} \frac{\sqrt{N}\|y(0){-}y^*\| {+}\sqrt{2N x(0)^{\!\top} \!P\! x(0)}}{\varepsilon}  {+}  2N\sqrt{\frac{K_f}{\varepsilon}}\sqrt{T}.\notag
		\end{align}
	\end{theorem}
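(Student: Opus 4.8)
The plan is to derive both bounds directly from Lemma~\ref{lemma4} by instantiating its free data $(\tilde y,\gamma)$ in the usual primal--dual manner; all the control-theoretic work (existence of a positive definite $P$ once $A_i-B_iK_i$ and $A_i-H_iC_i$ are Hurwitz, plus the Filippov/Lyapunov estimates along (\ref{eqpi2})) is exactly what Lemma~\ref{lemma4} packages, so only algebra on the Lagrangian remains. The one preliminary fact needed throughout is that, when $K_\mu\ge NK_g$ and $y^*$ is feasible (so $\bar g(t,y^*)\le\boldsymbol{0}$),
\[
\mu^\top g(t,y^*)-K_\mu h(\mu)\le 0,\qquad \forall\,\mu\in\mathbb{R}_+^{Nq},\ t\in[0,T].
\]
Indeed, fixing any node $k$ and writing $\sum_i\mu_i^\top g_i(t,y_i^*)=\mu_k^\top\bar g(t,y^*)+\sum_{i\neq k}(\mu_i-\mu_k)^\top g_i(t,y_i^*)$, the first term is $\le 0$ (as $\mu_k\ge\boldsymbol{0}$ and $\bar g(t,y^*)\le\boldsymbol{0}$) and the second is $\le K_g\sum_{i\neq k}\|\mu_i-\mu_k\|_1$ by $\|g_i\|\le K_g$. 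Since $\mathcal{G}$ is connected (Assumption~\ref{asp1}), summing $\|\mu_a-\mu_b\|_1$ along a path from $k$ to $i$ shows $\|\mu_i-\mu_k\|_1\le h(\mu)$, hence $\sum_{i\neq k}\|\mu_i-\mu_k\|_1\le(N-1)h(\mu)\le Nh(\mu)$, and the claim follows. Consequently $\mathcal{L}(t,y^*,\mu(t))\le f(t,y^*)$ for all $t$, while $h(\boldsymbol{1}_N\otimes\gamma)=0$, so $\mathcal{L}(t,y(t),\boldsymbol{1}_N\otimes\gamma)=f(t,y(t))+\gamma^\top\bar g(t,y(t))$.

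For the regret I apply Lemma~\ref{lemma4} with $\tilde y=y^*$ and $\gamma=\boldsymbol{0}_q$ (so $\tilde\mu=\boldsymbol{0}$). The left side of (\ref{l3}) is then at least $\int_0^T\big(f(t,y(t))-f(t,y^*)\big)\,dt=\mathcal{R}^T$, and the right side is $V_{(y^*,\boldsymbol{0})}(x(0),\boldsymbol{0})/\varepsilon$. Using $\eta(0)=\boldsymbol{0}$, $\mu(0)=\boldsymbol{0}$ and $y(0)=Cx(0)$ one reads off $V_{(y^*,\boldsymbol{0})}(x(0),\boldsymbol{0})=\tfrac12\|y(0)-y^*\|^2+x(0)^\top Px(0)$, which gives the stated constant regret bound.

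For the fit I again take $\tilde y=y^*$ but keep $\gamma\in\mathbb{R}_+^q$ free, obtaining $\mathcal{R}^T+\gamma^\top\!\int_0^T\bar g(t,y(t))\,dt\le V_{(y^*,\boldsymbol{1}_N\otimes\gamma)}(x(0),\boldsymbol{0})/\varepsilon$. Lower-bounding $\mathcal{R}^T\ge-2NK_fT$ via $|f(t,\cdot)|\le NK_f$, and evaluating $V_{(y^*,\boldsymbol{1}_N\otimes\gamma)}(x(0),\boldsymbol{0})=\tfrac12\|y(0)-y^*\|^2+\tfrac N2\|\gamma\|^2+x(0)^\top Px(0)$, yields a quadratic-in-$\gamma$ inequality. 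Writing $G:=\int_0^T\bar g(t,y(t))\,dt$, the choice $\gamma=\tfrac\varepsilon N[G]_+\in\mathbb{R}_+^q$ together with the identity $[G]_+^\top G=\|[G]_+\|^2=(\mathcal{F}^T)^2$ collapses it to $\tfrac\varepsilon{2N}(\mathcal{F}^T)^2\le\tfrac1\varepsilon\big(\tfrac12\|y(0)-y^*\|^2+x(0)^\top Px(0)\big)+2NK_fT$; isolating $(\mathcal{F}^T)^2$ and applying $\sqrt{a+b}\le\sqrt a+\sqrt b$ (twice) produces exactly the claimed $\mathcal{O}(\sqrt T)$ fit bound.

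I expect the delicate point to be the scaling in the fit step: the $\tfrac N{2\varepsilon}\|\gamma\|^2$ contribution from $V_2$ competes with $\gamma^\top G$, so $\gamma$ must be taken proportional to $[G]_+$ with precisely the constant $\varepsilon/N$ that maximizes $c\mapsto c-\tfrac{Nc^2}{2\varepsilon}$ — a smaller constant weakens the bound, a larger one destroys the positivity of the coefficient of $(\mathcal{F}^T)^2$. The consensus/connectedness estimate of the first paragraph and the evaluation of the energy function at $t=0$ are otherwise routine.
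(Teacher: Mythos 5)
Your proposal is correct and follows essentially the same route as the paper: both bounds are extracted from Lemma \ref{lemma4} with $\tilde y=y^*$, $\tilde\mu=\boldsymbol{0}$ for the regret and $\tilde\mu=\boldsymbol{1}_N\otimes\gamma$ with $\gamma=\tfrac{\varepsilon}{N}[G]_+$ for the fit, using the same $-2NK_fT$ lower bound on the regret and the same cancellation of $\mu^\top g(t,y^*)-K_\mu h(\mu)$ via $K_\mu\ge NK_g$ and connectivity. The only (harmless) deviation is that you establish $\phi(\mu)\le K_\mu h(\mu)$ by anchoring at a fixed node and summing $\ell_1$-distances along paths, whereas the paper routes through the average $\bar\mu$ and squares both sides ((\ref{phi1})--(\ref{h1})); both are equivalent, and your factor-of-two bookkeeping on the $x(0)^\top Px(0)$ term actually matches what the energy function (\ref{v1}) literally yields.
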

	
	\begin{proof}	
		See Appendix $B$.  In the proof, the regret and fit bounds are obtained through proper  selection of the parameters of Lemma \ref{lemma4}. Furthermore, the connectivity of the graph is utilized to eliminate unnecessary terms in the proof.
	\end{proof}
	
	\begin{remark}\label{remark2}		
		Theorem \ref{th1} shows that $\mathcal{R}^T=\mathcal{O}(1)$ and $\mathcal{F}^T = \mathcal{O}(\sqrt{T})$ under continuous communication, which match the best-known results in the centralized setting \citep{pater2017}.
		In comparison, for single-integrator multi-agent systems, explicit bounds on both the regret and fit with a sublinear growth are obtained in \citet{yi2020,li2022}  i.e., $\mathcal{R}^T=\mathcal{O}(T^{\max\{\kappa,1-\kappa\}})$, $\mathcal{F}^T = \mathcal{O}(T^{\max\{\kappa,1-\kappa\}})$ in \citet{yi2020} and $\mathcal{R}^T= \mathcal{O}(T^{\max\{\kappa,1-\kappa\}})$, $\mathcal{F}^T = \mathcal{O}(T^{\max\{\frac{1}{2}+\frac{\kappa}{2},1-\frac{\kappa}{2}\}})$ in \citet{li2022} for $\kappa\in(0,1)$. Theorem \ref{th1} achieves a lower regret bound than \citet{yi2020,li2022} under more complex system dynamics. 
		 However, \citet{yi2020,li2022} have also established the bounds of dynamic regret and fit, which may be one of our future works. In addition, the designed controller (\ref{eqpi1}) needs to know the total number of agents $N$ and the upper bound $G$ of all $f_i$, which, however, can be estimated in a distributed manner \citep{oli2017} within a finite time.
	\end{remark}

	%\begin{remark}
	{\em Extension to the Identical Output Case.}
	In problem (\ref{question2}), the output of each agent can be different, which only needs to satisfy the coupling constraints. When considering the distributed online consensus optimization with coupling inequality constraints \citep{lxx2020c}, the problem can be modeled as
	\begin{align}
		\begin{split}
			\min_{y \in \mathcal{Y}} &\int_{0}^{T} f(t,y)\,dt ,\\
			s.t.\  &y_i=y_j, \forall i,j\in[N],\\
			&\sum_{i=1}^{N}g_{i}(t, y_i)\le \boldsymbol0.\label{question3}
		\end{split}
	\end{align}
	Correspondingly, as in \citet{pater2020, ydm2022}, the individual regret of the $i$-th agent is defined as
	\begin{align}
		\mathcal{R}_i^T:=\int_{0}^{T} \Big(\sum_{j=1}^N f_j(t,y_i)-f(t,y^*)\Big)\,dt.
	\end{align}
	For this problem, the time-varying Lagrangian for agent $i$ can be modified as
	\begin{align}
		\widehat{\mathcal{L}}_i(t,y_i,\mu_i)=f_i(t,y_i)+\mu_i^\top g_i(t,y_i) - K_\mu h_i + K_y \chi_i,\label{la2}
	\end{align}
	where $K_y>0$ is the preset parameter and $\chi_i:= \frac{1}{2}\sum_{j=1}^N a_{ij}\left\|y_i-y_j\right\|_1$ as a metric of $y_i$'s disagreement. Define $\widehat{\mathcal{L}}(t,y,\mu) := \sum_{i=1}^{N}\widehat{\mathcal{L}}_i(t,y_i,\mu_i)$ and $\chi(t):=\sum_{i=1}^{N} \chi_i$.	
	Accordingly, the controller (\ref{eqpi1b}) is modified as
	\begin{align}
		\dot{\eta}_i&=\Pi_{\mathcal{Y}_i}[y_i, -\varepsilon \partial_{y_i}\widehat{\mathcal{L}}(t,y,\mu)],
	\end{align}
	where $\partial_{y_i}\widehat{\mathcal{L}}(t,y,\mu)\in \partial f_i(t,y_i) + K_y\sum_{j=1}^N a_{ij}\sign{y_i-y_j} +\mu_i^\top \partial g_i(t,y_i)$ is a subgradient of $\widehat{\mathcal{L}}$ with respect to $y_i$. Lemma \ref{lemma3} still holds by using this control protocol. Then, the regret of the $i$-th agent can be rewritten as
	\begin{align}
		\mathcal{R}_i^T{=}
		&\int_{0}^{T} \!\!\left(\widehat{\mathcal{L}}(t,y,\boldsymbol{0}_{Nq}) {-} \widehat{\mathcal{L}}(t,y^*,\mu)\right)dt
		{+}\int_{0}^{T} \!\!\mu^\top g(t,y^*)\, dt\notag\\
		&+\!\!\int_{0}^{T}\!\!\!\!\! \sum_{j=1,j\neq i}^N\!\! \big(f_j(y_i){-}f_j(y_j)\big)dt  {+}\!\!\int_{0}^{T}\!\!\!\big(K_y\chi(y){-}K_\mu h(\mu)\big) dt.
	\end{align}
	Additionally, assume that there exists a positive constant $K_{\partial f}$ such that $\|\partial f_i(t,y_i)\|\le K_{\partial f}$ for any $y_i\in\mathcal{Y}_i$ and $i\in[N]$.
	Since one has $f_j(y_j)\ge f_j(y_i) - K_{\partial f}\|y_j-y_i\|_1$, by following steps similar to those in the proof of Theorem \ref{th1} and additionally choosing $K_y\ge NK_{\partial f}$, one has $\mathcal{R}_i^T=\mathcal{O}(1)$ and $\mathcal{F}^T = \mathcal{O}(\sqrt{T})$. In summary, the  controller in this paper can be easily extended to the scenario with an identical output for all agents.	
	%\end{remark}

	\subsection{Event-triggered Communication}
	The above continuous-time control law, which requires each agent to know the real-time Lagrange multipliers of neighbors, may cause excessive communication overhead. 
	In this section, an event-triggered protocol is proposed to avoid continuous communication. 
	
	For agent $i$, suppose that $t_i^l$ is its $l$th communication instant and $\{t_i^1, \dots, t_i^l, \dots\}$ is its communication instant sequence. Define $\hat{\mu}_i(t)\!:=\! \mu_i(t_i^l), ~\forall t\!\in\! [t_i^l,t_i^{l+1})$ as the available information of its neighbors and $e_i\!:=\!\hat \mu_i(t)-\mu_i(t)$ as the measurement error. It can be known  that $e_i=0$ at any instant  $t_i^l$.
	
	As defined in \citet{sz2020}, a triggering is Zeno for agent $i$ if $\lim_{l\to\infty} t_i^l=t_i^\infty$ for some finite $t_i^\infty$, i.e., an infinite amount of events will be triggered in finite time $t_i^\infty$, which is unachievable for a physical system because of its limited  hardware frequency.
	It is also showed in this section that the closed-loop systems have no Zeno behavior.

	An event-triggered control law is proposed as
	\begin{subequations}\label{eqpi4}
		\begin{align}
			u_i&=-K_{i} \hat{x}_i+\Gamma_i\dot{\eta}_i - (\Upsilon_i - K_i\Psi_i)\eta_i, \label{eqpi4z}\\
			\dot{\eta}_i&=\Pi_{\mathcal{Y}_i}[y_i, -\varepsilon \partial_{y_i}\mathcal{L}(t,y,\mu)],\label{eqpi4a}\\
			\dot{\mu_i}&=\Pi_{\mathbb{R}_+^q}[\mu_i,\varepsilon g_i(t,y_i)-2\varepsilon K_\mu \sum_{j=1}^N a_{ij}\sign{\hat\mu_i-\hat\mu_j}], \label{eqpi4b}
		\end{align}
	\end{subequations}
	where $\hat\mu_i$ is the most recent broadcast value of $\mu_i$.
	Note that $0$ is chosen for the sign function in (\ref{eqpi4b}) when its argument is zero.
	
	The communication instant is chosen as
	\begin{align}
		t_i^{l+1}\!:=\!\inf_{t>t_i^l}\!\Big\{t\Big|\|e_i(t)\|{\ge}\frac{1}{6N\!\sqrt{q}}\!\sum_{j=1}^N \!a_{i\!j} \|\hat\mu_i{-}\hat\mu_j\|_{1} {+} \frac{\sigma e^{-\iota t}}{3N^{2}K_{\!\mu}\!\sqrt{q}}\Big\},\label{tau2}
	\end{align}
	where $\sigma$ and $\iota$ are prespecified positive real numbers. 
	
	At time $t=0$, each agent $i\in[N]$ initializes $\hat\mu_i=\mu_i(0)$. Then, at each time $t$, if the triggering condition (\ref{tau2}) is satisfied, agent $i$ then broadcasts the information $\hat\mu_i=\mu_i(t)$ to its neighbors and updates the control law (\ref{eqpi4b}). Meanwhile, agent $i$ updates the controller (\ref{eqpi4b}) whenever it receives new information $\hat\mu_j$ from its neighbors.

	The following lemma is a modification of Lemma \ref{lemma4} under event-triggered communication.

	\begin{lemma}\label{lemma6}
		If Assumptions \ref{asp1}-\ref{asp2e} hold and $\tilde{\mu}= \boldsymbol1_N \otimes \gamma, \forall \gamma\in \mathbb{R}_+^q$, then for any $T\ge0$ the linear multi-agent system (\ref{eqsys}) under control protocol (\ref{eqpi4}) satisfies
		\begin{align}
			\int_{0}^{T} \Big(\mathcal{L}(t,y(t),\tilde{\mu}){-}\mathcal{L}(t,\tilde{y},\mu(t))\Big)dt{\le}\frac{V_{(\tilde{y}, \tilde{\mu})}(y(0), \boldsymbol{0})}{\varepsilon} {+} \frac{\sigma}{\iota}.
		\end{align}
	\end{lemma}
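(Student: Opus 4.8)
The plan is to mirror the proof of Lemma \ref{lemma4} (Appendix $A$) as closely as possible, tracking how the event-triggered update (\ref{eqpi4b}) differs from the continuous one (\ref{eqpi1c}), and showing that the discrepancy contributes only the additive $\sigma/\iota$ term. I would start from the same energy function $V_{(\tilde y,\tilde\mu)}(x,\mu)=V_1+V_2+V_3$ as in (\ref{v1}), differentiate it along the closed-loop trajectories of (\ref{eqpi4}), and observe that the $V_1$ and $V_3$ parts of the calculation are unchanged, since the $x$- and $\eta$-dynamics have the same form as in the continuous case. The only change is in $\dot V_2 = (\mu-\tilde\mu)^\top\dot\mu$, where $\dot\mu$ now uses $\mathrm{sgn}(\hat\mu_i-\hat\mu_j)$ in place of a subgradient of the penalty evaluated at the true $\mu$.

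The key step is therefore to control $(\mu-\tilde\mu)^\top\dot\mu$ with the perturbed consensus term. Using Lemma 2 (inequality (\ref{eqle2})) for the projection $\Pi_{\mathbb{R}_+^q}$, I would bound
\[
(\mu_i-\tilde\mu_i)^\top\dot\mu_i \le \varepsilon(\mu_i-\tilde\mu_i)^\top g_i(t,y_i) - 2\varepsilon K_\mu (\mu_i-\tilde\mu_i)^\top\!\!\sum_{j}a_{ij}\,\mathrm{sgn}(\hat\mu_i-\hat\mu_j).
\]
Then write $\mu_i-\tilde\mu_i = (\hat\mu_i-\tilde\mu_i) - e_i$. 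For the $\hat\mu_i-\tilde\mu_i$ part, since $\tilde\mu=\boldsymbol1_N\otimes\gamma$ and the graph is undirected, the double sum $\sum_i\sum_j a_{ij}(\hat\mu_i-\tilde\mu_i)^\top\mathrm{sgn}(\hat\mu_i-\hat\mu_j)$ symmetrizes into $\tfrac12\sum_i\sum_j a_{ij}(\hat\mu_i-\hat\mu_j)^\top\mathrm{sgn}(\hat\mu_i-\hat\mu_j) = \tfrac12\sum_i\sum_j a_{ij}\|\hat\mu_i-\hat\mu_j\|_1 = h(\hat\mu)$ (the $\gamma$ terms cancel by symmetry), which gives exactly the penalty term needed to reproduce $-K_\mu h$ in the Lagrangian, up to the difference between $h(\mu)$ and $h(\hat\mu)$ — itself bounded by the errors $e_i$. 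The error part $e_i^\top\sum_j a_{ij}\,\mathrm{sgn}(\hat\mu_i-\hat\mu_j)$ is bounded in absolute value by $\sqrt q\,\|e_i\|\sum_j a_{ij}$; substituting the triggering rule (\ref{tau2}), the first piece of the threshold (proportional to $\sum_j a_{ij}\|\hat\mu_i-\hat\mu_j\|_1$) is absorbed into the (negative) consensus term with room to spare — this is where the constants $1/(6N\sqrt q)$ and the factor $2$ in (\ref{eqpi4b}) are calibrated — while the second, exponentially decaying piece contributes at most $\int_0^\infty \frac{\sigma e^{-\iota t}}{\cdots}\,dt$-type terms, which after the arithmetic collapse to $\sigma/\iota$.

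After these estimates, I would integrate $\dot V \le -\varepsilon\big(\mathcal L(t,y,\tilde\mu)-\mathcal L(t,\tilde y,\mu)\big) + (\text{decaying error terms})$ over $[0,T]$, use $V(y(0),\boldsymbol0)\ge 0$ and $\eta(0)=\boldsymbol0$ exactly as in Lemma \ref{lemma4}, and divide by $\varepsilon$ to obtain the claimed bound with the extra $\sigma/\iota$. The main obstacle I anticipate is the bookkeeping in the second bullet: carefully splitting $\mu_i-\tilde\mu_i$ into broadcast and error parts, re-symmetrizing the now-asymmetric double sum (the arguments of the sign functions involve $\hat\mu$ while the multiplying vector involves $\mu$), and verifying that the chosen triggering threshold constants indeed make the consensus term dominate the error term pointwise — getting the factor of $2$, the $6N\sqrt q$, and the $3N^2K_\mu\sqrt q$ to line up is the delicate part. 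The no-Zeno claim is asserted separately in the text and need not be proved here.
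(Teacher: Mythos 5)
Your proposal is correct and follows essentially the same route as the paper's Appendix C: the only substantive difference is bookkeeping — the paper adds and subtracts the true-state consensus term $\varepsilon K_\mu\sum_j a_{ij}\,\mathrm{sgn}(\mu_i-\mu_j)$ so that $\dot V_2$ splits into the exact Lagrangian subgradient plus two residuals ($S_2$, $S_3$), whereas you split the multiplier $\mu_i-\tilde\mu_i$ into $(\hat\mu_i-\tilde\mu_i)-e_i$, but both reduce to absorbing the proportional part of the trigger threshold into the negative $h(\hat\mu)$ term and integrating the $\sigma e^{-\iota t}$ part to $\sigma/\iota$. The delicate constant-matching you flag is exactly where the paper's factors $2$, $1/(6N\sqrt q)$, and $\sigma e^{-\iota t}/(3N^2K_\mu\sqrt q)$ are spent, and it does close.
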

	\begin{proof}
		See Appendix $C$.
	\end{proof}
	
	We now state the main result about the regret and fit bounds with event-triggered communication controller (\ref{eqpi4}).
	
	\begin{theorem}\label{th3}
		Suppose that Assumptions \ref{asp1}-\ref{asp2e} hold. Let $K_i$, $H_i$, $i\in[N]$ be such that $A_i-B_iK_i$ and $A_i-H_iC_i$ are Hurwitz. Then for any $T\ge0$, under controller (\ref{eqpi4}) with $\varepsilon>0$ and $K_\mu \ge NK_g$, the following regret and fit bounds hold:
		\begin{align}
			\mathcal{R}^T\le &\frac{\|y(0)\!-\!y^*\|^2 + x(0)^\top P x(0)}{2\varepsilon} +\frac{\sigma}{\iota};\notag\\
			\mathcal{F}^T\le &\frac{\sqrt{N}\|y(0)-y^*\|}{\varepsilon}  +
			\frac{\sqrt{2N x(0)^{\!\top} \!P\! x(0)}}{\varepsilon}
			+\sqrt{\frac{2N\sigma}{\varepsilon\iota}} \notag\\
			&+ 2N\sqrt{\frac{K_f}{\varepsilon}}\sqrt{T}.\notag
		\end{align}
		Moreover, under the event triggering condition (\ref{tau2}), the close-loop system does not exhibit the Zeno behavior.
		%	where $\xi_6:=\frac{\|y(0)\!-\!y^*\|^2}{2\varepsilon} +\frac{\sigma}{\varepsilon\iota}$ and $\xi_7 := \frac{2\varepsilon  NK_g^2}{\lambda_2} + \frac{2\sqrt{\lambda_2N\sigma}Kg}{\lambda_2} + \sqrt{N}Kg$.
	\end{theorem}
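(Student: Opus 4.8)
The plan is to derive the regret and fit bounds as consequences of Lemma~\ref{lemma6}, following the exact route by which Theorem~\ref{th1} follows from Lemma~\ref{lemma4}, and then to establish the absence of Zeno behavior by a self-contained dwell-time estimate. The only structural difference from Theorem~\ref{th1} is that Lemma~\ref{lemma6} carries an extra additive $\sigma/\iota$ on its right-hand side, which will produce the extra $\sigma/\iota$ in the regret bound and the extra $\sqrt{2N\sigma/(\varepsilon\iota)}$ in the fit bound.

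\emph{Regret.} I would apply Lemma~\ref{lemma6} with $\tilde y=y^{*}$ and $\gamma=\boldsymbol 0_q$, so that $\tilde\mu=\boldsymbol 0_{Nq}$ and $h(\tilde\mu)=0$. Expanding $\mathcal L$ via (\ref{lag}), the left-hand side becomes $\mathcal R^T+\int_0^T\bigl(K_\mu h(\mu(t))-\mu(t)^\top g(t,y^{*})\bigr)\,dt$. Writing $\mu_i=\bar\mu+(\mu_i-\bar\mu)$ with $\bar\mu=\frac1N\sum_i\mu_i$ and using feasibility of $y^{*}$ (hence $\bar\mu^\top\bar g(t,y^{*})\le 0$), the bound $\|g_i\|\le K_g$, and a connectivity estimate of the form $\sum_i\|\mu_i-\bar\mu\|_1\le\frac N2 h(\mu)$ valid under Assumption~\ref{asp1} (each pairwise $\ell_1$-gap being controlled by the edge-sum $h(\mu)$ along a spanning tree), I obtain $K_\mu h(\mu)-\mu^\top g(t,y^{*})\ge\bigl(K_\mu-\frac N2 K_g\bigr)h(\mu)\ge 0$ as soon as $K_\mu\ge NK_g$. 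Therefore $\mathcal R^T\le V_{(y^{*},\boldsymbol 0)}(x(0),\boldsymbol 0)/\varepsilon+\sigma/\iota$, and substituting $\eta(0)=\boldsymbol 0$, $\mu(0)=\boldsymbol 0$ reduces this to the stated form.

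\emph{Fit.} I would apply Lemma~\ref{lemma6} once more, now with $\tilde y=y^{*}$ and $\tilde\mu=\boldsymbol 1_N\otimes\gamma$ where $\gamma:=\frac{\varepsilon}{N}\bigl[\int_0^T\bar g(t,y(t))\,dt\bigr]_+\in\mathbb R_+^q$; if this vector is $\boldsymbol 0$ then $\mathcal F^T=0$ and nothing is needed. The same penalty/connectivity argument discards $\int_0^T\bigl(K_\mu h(\mu)-\mu^\top g(t,y^{*})\bigr)\,dt\ge 0$; the cross term equals $\gamma^\top\!\int_0^T\bar g(t,y(t))\,dt=\frac{\varepsilon}{N}(\mathcal F^T)^2$ by the identity $[v_j]_+v_j=[v_j]_+^2$; and $\int_0^T\bigl(f(t,y(t))-f(t,y^{*})\bigr)\,dt\ge -2NK_fT$ from $|f_i|\le K_f$. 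Since $\frac12\|\tilde\mu\|^2=\frac N2\|\gamma\|^2=\frac{\varepsilon}{2N}(\mathcal F^T)^2$ occurs inside $V$, exactly half of the extracted quadratic term is absorbed and I am left with $\frac{\varepsilon}{2N}(\mathcal F^T)^2\le 2NK_fT+\frac1\varepsilon V_{(y^{*},\boldsymbol 0)}(x(0),\boldsymbol 0)+\frac\sigma\iota$; solving for $\mathcal F^T$, splitting $V_{(y^{*},\boldsymbol 0)}(x(0),\boldsymbol 0)$ into $\frac12\|y(0)-y^{*}\|^2$ and $x(0)^\top Px(0)$, and using $\sqrt{a+b+c+d}\le\sqrt a+\sqrt b+\sqrt c+\sqrt d$ gives exactly the four summands of the theorem.

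\emph{No Zeno, and main obstacle.} Fix agent $i$ and a triggering time $t_i^l$; on $[t_i^l,t_i^{l+1})$ the broadcast value $\hat\mu_i=\mu_i(t_i^l)$ is constant, so $e_i(t)=\hat\mu_i-\mu_i(t)$ obeys $\dot e_i=-\dot\mu_i$ a.e. with $e_i(t_i^l)=0$. Since $\|\Pi_{\mathcal S}[x,v]\|\le\|v\|$ (a consequence of the nonexpansiveness of $P_{\mathcal S}$), $\|g_i\|\le K_g$ and $\|\mathrm{sgn}(\cdot)\|\le\sqrt q$, the update (\ref{eqpi4b}) yields $\|\dot\mu_i\|\le\varepsilon K_g+2\varepsilon K_\mu\sqrt q\,d_i=:M_i$ with $d_i=\sum_j a_{ij}$, whence $\|e_i(t)\|\le M_i(t-t_i^l)$. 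At $t=t_i^{l+1}$ the threshold in (\ref{tau2}) is attained, so, dropping its nonnegative first part, $M_i(t_i^{l+1}-t_i^l)\ge\|e_i(t_i^{l+1})\|\ge\frac{\sigma e^{-\iota t_i^{l+1}}}{3N^{2}K_\mu\sqrt q}$; hence on any finite horizon $[0,\bar T]$ every inter-event interval of agent $i$ is at least $\frac{\sigma e^{-\iota\bar T}}{3N^{2}K_\mu\sqrt q\,M_i}>0$, so the triggering times cannot accumulate, ruling out Zeno behavior for every agent. I expect the fit-bound bookkeeping to be the delicate step — verifying that the chosen $\gamma$ makes the $\frac12\|\tilde\mu\|^2$ term in $V$ combine with the extracted $\|[\int_0^T\bar g\,dt]_+\|^2$ to leave a clean quadratic inequality with the correct coefficient; everything else is a transcription of the Theorem~\ref{th1} proof (now carrying the extra $\sigma/\iota$) together with the routine linear-growth-versus-positive-floor argument above.
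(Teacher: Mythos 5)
Your proposal is correct and follows essentially the same route as the paper: the regret and fit bounds are obtained exactly as in the proof of Theorem \ref{th1} (saddle-point decomposition via Lemma \ref{lemma6}, the connectivity/penalty argument killing $\mu^\top g(t,y^*)-K_\mu h(\mu)$ under $K_\mu\ge NK_g$, and the choice $\gamma=\frac{\varepsilon}{N}[\int_0^T\bar g\,dt]_+$ so that half the quadratic term is absorbed by $\frac12\|\tilde\mu\|^2$ in $V$), with the extra $\sigma/\iota$ propagating as you describe, and the Zeno exclusion is the paper's linear-growth-versus-positive-threshold argument with an explicit Lipschitz constant in place of the paper's abstract bound $\delta$. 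The only quibble is your intermediate constant $\sum_i\|\mu_i-\bar\mu\|_1\le\frac{N}{2}h(\mu)$, which the straightforward spanning-path estimate only gives with constant $N$ rather than $N/2$; this is harmless since $K_\mu\ge NK_g$ still suffices for the sign conclusion.
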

	
	\begin{proof}
		See Appendix $D$. The proofs of regret and fit bounds of Theorem \ref{th3} are similar to those of Theorem \ref{th1}, except that Lemma \ref{lemma6} is used instead of Lemma \ref{lemma4}. The exponential decay term $e^{-\iota t}$ is introduced in (\ref{tau2}) to exclude Zeno behavior.	
	\end{proof}
	
	\begin{remark}\label{remark3}
		Theorem \ref{th3} means that $\mathcal{R}^T=\mathcal{O}(1)$ and $\mathcal{F}^T = \mathcal{O}(\sqrt{T})$ still hold even under event-triggered communication. The bounds of regret and fit are determined by the communication frequency. Generally speaking, decreasing $\sigma$ and increasing $\iota$ will achieve smaller bounds on regret and fit, but meanwhile increase the communication frequency, which results in a tradeoff between them.	
	\end{remark}
	
	\begin{remark}
		The event-triggered communication mechanism has been widely used in various distributed optimization algorithms \citep{lzh2020, yy2022, yt2022} because of its  high flexibility and scalability \citep{ now2019}. But to the best of our knowledge, existing works on distributed online optimization have not considered the event-triggered mechanism. The analysis of Theorem \ref{th3} shows that using event-triggering control law can guarantee the performance of the  systems while reducing the communication overhead.
	\end{remark}
	
	%%%%%%%%%%%%%%%%%%%%%%%%%%%%%%%%%%%%%%%%%%%%%

	%%%%%%%%%%%%%%%%%%%%%%%%%%%%%%%%%%%%%%%%%%%%%
	
	\subsection{Continuous Communication with Noise}\label{nono}
	
	In  controller (\ref{eqpi1}), the sign functions can be roughly regarded as the relative directions of the intermediate variables since $\sign{\mu_i(t)-\mu_j(t)}=\sign{\frac{\mu_i(t)-\mu_j(t)}{\|\mu_i(t)-\mu_j(t)\|}}$.
	In practice, measurements of the relative direction of neighbors may be inaccurate due to communication noise or sensor inaccuracies. To examine the effect of this inaccuracy on the direction of relative state measurements, we replace $\sign{\mu_i-\mu_j}$ by $\sign{\frac{\mu_i-\mu_j}{\|\mu_i-\mu_j\|}+\epsilon_{ij}(t)}= \sign{\mu_i-\mu_j+\epsilon_{ij}(t)\|\mu_i-\mu_j\|}$, where $\epsilon_{ij}(t)\in \mathbb{R}^q$ is the time-varying communication noise from agent $j$ to $i$. 
	It is noteworthy that the noise $\epsilon_{ij}(t)$ is incurred when the {\em direction} of the relative state of the neighboring agent $j$ is measured by agent $i$, and thereby is added to the normalized relative state, as done in \citet{cao2021}.
	
	Similar to (\ref{regert}) and (\ref{fit}), the regret and fit are defined in the probabilistic sense as
	\begin{align}
		&\overline{\mathcal{R}}^T:=\mathbb{E}\left[\int_{0}^{T} \Big(f(t,y(t))-f(t,y^*)\Big)\,dt\right];\label{regertn}\\
		&\overline{\mathcal{F}}^T:=\left\|\,\mathbb{E}\left[\,\Big[\int_{0}^{T} \sum_{i=1}^{N}g_{i}(t, y_i) \,dt\Big]_+\right]\,\right\|.\label{fitn}
	\end{align}

	\begin{assumption}\label{asp5}
		The noises $\epsilon_{ij}(t)$ are independent and identically distributed. And their common probability density function,  denoted by $q(\epsilon)$, is symmetric.
	\end{assumption}
	
	%For the convenience of analysis, we define $\overline{\epsilon}:=\max \mathbb{E}\{\|\epsilon_{ij}(t)\|_1\}$.

		The time-varying Lagrangian with noise is constructed as
		\begin{align}
			\mathcal{\tilde{L}}(t,y,\mu)=\sum_{i=1}^{N}f_i(t,y_i)+\mu^\top g(t,y) - K_\mu \tilde{h}(\mu),\label{lan}
		\end{align}
		where $\tilde{h}(\mu):= \frac{1}{2}\sum_{i=1}^N\sum_{j=1}^N a_{ij}\big\|\mu_i-\mu_j +\epsilon_{ij}(t)\|\mu_i(t)-\mu_j(t)\|\big\|_1$ and $\mu(t)$  is the trajectory generated by controller (\ref{eqpi7}).

		\begin{lemma}\label{lemma5}
			If Assumptions \ref{asp1}-\ref{asp5} hold and $\tilde{\mu}:= \boldsymbol1_N \otimes \gamma, \forall \gamma\in \mathbb{R}_+^q$, then for any $T\ge0$ the linear multi-agent system (\ref{eqsys}) under control protocol (\ref{eqpi1}) satisfies
			\begin{align}
				\int_{0}^{T} \mathbb{E}\big\{ \mathcal{\tilde{L}}(t,y(t),\tilde{\mu}) - \mathcal{\tilde{L}}(t,\tilde y,\mu(t)) \big\} \,dt\le\frac{V_{(\tilde{y}, \tilde{\mu})}(y(0), \boldsymbol{0})}{\varepsilon}.\label{ln1}
			\end{align}
		\end{lemma}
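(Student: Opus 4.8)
The plan is to carry the argument of Lemma~\ref{lemma4} through essentially unchanged, adding an expectation over the noise and replacing the deterministic treatment of the consensus penalty by a probabilistic one that rests on the symmetry postulated in Assumption~\ref{asp5}. I would keep the energy function $V_{(\tilde y,\tilde\mu)}=V_1+V_2+V_3$ of (\ref{v1}), choosing a block-diagonal $P\succ0$ with $(A_i-B_iK_i)^\top P_i+P_i(A_i-B_iK_i)\prec0$ for each $i$ (possible because $A_i-B_iK_i$ is Hurwitz); the Hurwitz property of $A_i-H_iC_i$ controls the observer error $\tilde x:=x-\hat x$, whose dynamics $\dot{\tilde x}=(A-HC)\tilde x$ are exponentially stable, so $\tilde x$ enters only as a vanishing perturbation handled exactly as in Appendix~$A$.

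Because the noisy protocol (\ref{eqpi7}) differs from (\ref{eqpi1}) only in the $\mu$-equation, the estimate of $\dot V_1+\dot V_3$ is verbatim that of Lemma~\ref{lemma4}: with the change of variables $z:=x-\Psi\eta$ (so $Cz=y-\eta$ by (\ref{eqle33})), the identities (\ref{eqle3}) giving $\dot z=(A-BK)z+BK\tilde x$, the differentiated projection inequality (\ref{eqle2}) applied to $\dot\eta$, and convexity of $y\mapsto\mathcal{\tilde{L}}(t,y,\mu)$ (which shares its $y$-dependence with $\mathcal{L}$), one gets, almost surely, $\dot V_1+\dot V_3\le\varepsilon\big(\mathcal{\tilde{L}}(t,\tilde y,\mu)-\mathcal{\tilde{L}}(t,y,\mu)\big)$, the cross terms in $z$ and $\tilde x$ being dominated by the strictly negative quadratic furnished by $\dot V_3$.

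The genuinely new step is $\dot V_2$. Applying (\ref{eqle2}) to the $\mu$-equation (with $\mu,\tilde\mu\in\mathbb{R}_+^{Nq}$ and $\tilde\mu_i=\gamma$) yields, almost surely,
\begin{align*}
\dot V_2\le{}&\varepsilon(\mu-\tilde\mu)^\top g(t,y)\\
&-\varepsilon K_\mu\sum_{i=1}^N\sum_{j=1}^N a_{ij}(\mu_i-\gamma)^\top\mathrm{sgn}\big(\mu_i-\mu_j+\epsilon_{ij}(t)\|\mu_i-\mu_j\|\big).
\end{align*}
Conditioning on the history up to time $t$, relative to which $\mu(t)$ is measurable and $\epsilon_{ij}(t)$ independent, Assumption~\ref{asp5} makes $w\mapsto\mathbb{E}[\mathrm{sgn}(w+\epsilon\|w\|)]$ an odd, componentwise sign-preserving map of $w$; this oddness, together with $a_{ij}=a_{ji}$, restores the symmetrization of the double sum used in the noiseless proof, so that in conditional expectation the sign term is controlled by the disagreement penalty $\tilde h(\mu)$ of $\mathcal{\tilde{L}}$ in (\ref{lan}). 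Taking expectations then gives $\mathbb{E}[\dot V_2]\le\varepsilon\,\mathbb{E}\big[\mathcal{\tilde{L}}(t,y,\mu)-\mathcal{\tilde{L}}(t,y,\tilde\mu)\big]$. Adding the two estimates, $\tfrac{d}{dt}\mathbb{E}[V]\le\varepsilon\,\mathbb{E}\big[\mathcal{\tilde{L}}(t,\tilde y,\mu)-\mathcal{\tilde{L}}(t,y,\tilde\mu)\big]$; integrating over $[0,T]$ and using $\mathbb{E}[V(T)]\ge0$ together with $V(0)=V_{(\tilde y,\tilde\mu)}(y(0),\boldsymbol{0})$ gives (\ref{ln1}).

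The hard part is this conditional-expectation step. Unlike in Lemma~\ref{lemma4}, $\mathrm{sgn}(\mu_i-\mu_j+\epsilon_{ij}\|\mu_i-\mu_j\|)$ is not a subgradient of the convex map $h$, so convexity cannot be applied directly; the symmetry of $q(\epsilon)$ is precisely what (i) turns the expected sign into an odd function of $\mu_i-\mu_j$, reinstating the $a_{ij}$-versus-$a_{ji}$ cancellation that removes the stray $\mu_i+\mu_j-2\gamma$ terms, and (ii) fixes the sign of the residual correction $\mathbb{E}\big[\|\mu_i-\mu_j\|\,\epsilon_{ij}^\top\mathrm{sgn}(\mu_i-\mu_j+\epsilon_{ij}\|\mu_i-\mu_j\|)\big]$. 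One must also dispose of the null set $\{\mu_i=\mu_j\}$ (on which the sign is set-valued and taken to be $\boldsymbol{0}$), work consistently in the Filippov sense with the stochastic forcing, and appeal to Fubini to interchange expectation and time integration, which is legitimate since $g_i$ and the sign function are bounded.
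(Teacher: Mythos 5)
Your proposal is correct and follows essentially the same route as the paper's Appendix~$E$: the $V_1$ and $V_3$ estimates are reused verbatim from Lemma~\ref{lemma4}, and for $V_2$ one conditions on $\mu(t)$ and uses the symmetry of the noise density so that the single noisy sign in the drift agrees in conditional expectation with a subgradient of the noisy penalty $\tilde{h}$ (the paper's identity (\ref{condi1})), after which concavity of $\mathcal{\tilde{L}}$ in $\mu$ yields (\ref{v44}) and integration gives (\ref{ln1}). Your ``oddness restores the symmetrization'' step is exactly this subgradient identification phrased differently, so the two arguments coincide.
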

	
	\begin{proof}
		See Appendix $E$.
	\end{proof}

	A continuous control law using noisy signs is proposed as
	\begin{subequations}\label{eqpi7}
		\begin{align}
			u_i=&-K_{i} \hat{x}_i+\Gamma_i\dot{\eta}_i - (\Upsilon_i - K_i\Psi_i)\eta_i, \label{eqpi7z}\\
			\dot{\eta}_i=&\Pi_{\mathcal{Y}_i}[y_i, -\varepsilon \partial_{y_i}\mathcal{L}(t,y,\mu)],\label{eqpi7a}\\
			\dot{\mu_i}=&\Pi_{\mathbb{R}_+^q}\Big[\mu_i,\varepsilon g_i(t,y_i){-}\varepsilon K_\mu \sum_{j=1}^N a_{ij} \sign{\mu_i{-}\mu_j{+}\epsilon_{ij}(t)\|\mu_i{-}\mu_j\|}\Big], \label{eqpi47}
		\end{align}
	\end{subequations}
	where $\epsilon_{ij}(t)$ is the time-varying measurement noise from agent $j$ to $i$. Note that $0$ is chosen for the sign function in (\ref{eqpi47}) when its argument is zero.

	We now state the main result about the regret and fit bounds of controller (\ref{eqpi7}) with noisy communications.
	
	\begin{theorem}\label{th7}
		Suppose that Assumptions \ref{asp1}-\ref{asp5} hold. Let $K_i$, $H_i$, $i\in[N]$ be such that $A_i-B_iK_i$ and $A_i-H_iC_i$ are Hurwitz. Then for any $T\ge0$, under controller (\ref{eqpi7}) with $\varepsilon>0$ and $K_\mu \ge N^2K_g$, if the noises satisfy $\mathbb{E}\left[\|\epsilon_{ij}(t)\|_1\right]\le \frac{1}{2}-\frac{N^2K_g}{2K_\mu}$, the following regret and fit bounds hold
		\begin{align}
			&\overline{\mathcal{R}}^T\le 
			\frac{\|y(0)\!-\!y^*\|^2 + x(0)^\top P x(0)}{2\varepsilon};\notag\\
			&\overline{\mathcal{F}}^T\le \frac{\sqrt{N}\|y(0){-}y^*\| {+}\sqrt{2N x(0)^{\!\top} \!P\! x(0)}}{\varepsilon}  {+}  2N\sqrt{\frac{K_f}{\varepsilon}}\sqrt{T}.\notag	
		\end{align}
	\end{theorem}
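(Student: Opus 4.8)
The plan is to mirror the structure of the proof of Theorem \ref{th1}, but now starting from the noisy energy inequality in Lemma \ref{lemma5} rather than Lemma \ref{lemma4}, and carefully tracking where the expectation and the noise-direction terms enter. First I would instantiate Lemma \ref{lemma5} with the two standard choices of the free parameters. For the regret bound, take $\tilde y = y^*$ (a feasible offline minimizer, so $\bar g(t,y^*)\le \boldsymbol 0$) and $\gamma = \boldsymbol 0_q$, hence $\tilde\mu = \boldsymbol 0_{Nq}$. Then $\mathcal{\tilde L}(t,y,\tilde\mu) = f(t,y) - K_\mu \tilde h(\mu)$ and $\mathcal{\tilde L}(t,y^*,\mu) = f(t,y^*) + \mu^\top g(t,y^*) - K_\mu \tilde h(\mu)$, so the penalty terms $-K_\mu\tilde h(\mu)$ cancel and the left side of (\ref{ln1}) becomes $\mathbb{E}\big[\int_0^T (f(t,y)-f(t,y^*))\,dt\big] - \mathbb{E}\big[\int_0^T \mu^\top g(t,y^*)\,dt\big]$. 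Since $\mu(t)\in\mathbb{R}_+^{Nq}$ and $g(t,y^*)\le\boldsymbol0$ componentwise (after summing appropriately over agents, or directly using $\bar g(t,y^*)\le\boldsymbol0$ with the common-multiplier structure), the second expectation is nonpositive, so $\overline{\mathcal R}^T = \mathbb{E}\big[\int_0^T (f(t,y)-f(t,y^*))\,dt\big] \le \mathbb{E}\big[\int_0^T\mu^\top g(t,y^*)\,dt\big] + V_{(y^*,\boldsymbol0)}(y(0),\boldsymbol0)/\varepsilon \le V_{(y^*,\boldsymbol0)}(y(0),\boldsymbol0)/\varepsilon$. Evaluating $V$ at $(x(0),\boldsymbol0)$ with $\eta(0)=\boldsymbol0$ gives $V_1 = \tfrac12\|Cx(0)-y^*\|^2 = \tfrac12\|y(0)-y^*\|^2$, $V_2 = 0$, $V_3 = x(0)^\top P x(0)$, yielding the stated $\overline{\mathcal R}^T$ bound.

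For the fit bound, I would proceed componentwise as in Theorem \ref{th1}: fix an index $j\in[q]$ and choose $\gamma = \rho e_j$ for a scalar $\rho>0$ to be optimized, where $e_j$ is the $j$th standard basis vector in $\mathbb{R}^q$, and take $\tilde y$ to be a fixed feasible point $\check y\in\mathcal{Y}^\dagger$ (nonempty by Assumption \ref{asp2e}). With $\tilde\mu = \boldsymbol1_N\otimes(\rho e_j)$, the consensus penalty $\tilde h(\tilde\mu) = 0$, and $\mathcal{\tilde L}(t,y,\tilde\mu) = f(t,y) + \rho\sum_i g_{i,j}(t,y_i) - K_\mu\tilde h(\mu)$ while $\mathcal{\tilde L}(t,\check y,\mu) = f(t,\check y) + \mu^\top g(t,\check y) - K_\mu\tilde h(\mu)$. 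Using $\mu\ge\boldsymbol0$, $g(t,\check y)\le\boldsymbol0$, $|f(t,\cdot)|\le NK_f$, and that the penalty terms cancel, Lemma \ref{lemma5} gives $\rho\,\mathbb{E}\big[\int_0^T\sum_i g_{i,j}(t,y_i)\,dt\big] \le V_{(\check y,\tilde\mu)}(y(0),\boldsymbol0)/\varepsilon + 2NK_f T$. The key point is to bound $V_{(\check y,\tilde\mu)}(y(0),\boldsymbol0)$: here $V_2 = \tfrac12\|\boldsymbol0 - \boldsymbol1_N\otimes(\rho e_j)\|^2 = \tfrac12 N\rho^2$, so we obtain $\rho\,\mathbb{E}[F_j^T] \le \tfrac{1}{2\varepsilon}(\|y(0)-\check y\|^2 + N\rho^2 + x(0)^\top Px(0)) + 2NK_f T$. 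Since $\mathbb{E}\big[[F_j^T]_+\big] \le \rho^{-1}\big(\text{RHS}\big)$ and the whole estimate must hold for every $\rho>0$ — actually one takes $\rho = \mathbb{E}\big[[F_j^T]_+\big]$ so that the $N\rho^2/(2\varepsilon)$ term is absorbed, leading to $\tfrac12 \mathbb{E}[[F_j^T]_+]^2 \lesssim$ (constants) $+ 2NK_f T$, then solve the quadratic. Summing the resulting per-component bounds via $\overline{\mathcal F}^T = \sqrt{\sum_j \mathbb{E}[[F_j^T]_+]^2}$ and using $\sqrt{a+b}\le\sqrt a + \sqrt b$ and $\sqrt{\sum_j c_j^2}\le\sum_j c_j$ type inequalities gives the stated bound with the $2N\sqrt{K_f/\varepsilon}\sqrt T$ leading term; the $\|y(0)-\check y\|$ term should collapse to $\|y(0)-y^*\|$ by taking $\check y = y^*$ when $y^*$ is itself strictly feasible, or by a standard argument bounding $\|y(0)-\check y\|$ over the compact $\mathcal Y$.

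The main obstacle — and the place where the hypotheses $K_\mu \ge N^2 K_g$ and $\mathbb{E}[\|\epsilon_{ij}(t)\|_1]\le \tfrac12 - \tfrac{N^2K_g}{2K_\mu}$ must be used — is hidden inside Lemma \ref{lemma5}: one needs the expected noisy penalty $K_\mu\,\mathbb{E}[\tilde h(\mu)]$ to still dominate the constraint contribution $\mu^\top g(t,y)$ in the direction transverse to consensus, exactly as $K_\mu h(\mu)$ did (via $K_\mu\ge NK_g$) in the noiseless case. Concretely, for the argument to close we need the subgradient of $\|\mu_i-\mu_j+\epsilon_{ij}\|\mu_i-\mu_j\|\,\|_1$ in expectation to point ``mostly'' along $\sign{\mu_i-\mu_j}$; symmetry of the noise density (Assumption \ref{asp5}) makes the expected perturbation unbiased, and the bound on $\mathbb{E}[\|\epsilon_{ij}\|_1]$ ensures the expected inner product $\mathbb{E}[(\mu_i-\mu_j)^\top \sign{\mu_i-\mu_j+\epsilon_{ij}\|\mu_i-\mu_j\|}] \ge (1 - 2\mathbb{E}[\|\epsilon_{ij}\|_1])\|\mu_i-\mu_j\|_1$, which combined with $K_\mu \ge N^2K_g$ recovers an effective penalty strength $\ge N K_g$. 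Once Lemma \ref{lemma5} is in hand, the derivation above is entirely parallel to Theorem \ref{th1}; so in the body of this proof I would simply cite Lemma \ref{lemma5}, then carry out the two parameter instantiations and the quadratic-optimization-in-$\rho$ step, relegating the delicate noise-penalty estimate to Appendix $E$ (where Lemma \ref{lemma5} is proved) and to Appendix $F$ for the details of this theorem. I expect the proof statement here to read: ``See Appendix $F$. The proof follows the lines of that of Theorem \ref{th1}, using Lemma \ref{lemma5} in place of Lemma \ref{lemma4}; the conditions $K_\mu\ge N^2K_g$ and the bound on $\mathbb{E}[\|\epsilon_{ij}\|_1]$ guarantee that the expected consensus penalty still dominates the coupled-constraint term.''
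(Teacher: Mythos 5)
Your overall architecture matches the paper's: instantiate Lemma \ref{lemma5} at $(\tilde y,\tilde\mu)=(y^*,\boldsymbol 0_{Nq})$ for the regret and at $\tilde\mu=\boldsymbol 1_N\otimes\gamma$ with $\gamma$ proportional to the positive parts of $F_j^T$ for the fit, absorb the $\tfrac{N}{2}\|\gamma\|^2$ contribution from $V_2$, and solve the resulting quadratic; that is exactly Appendix $F$ combined with the fit argument of Theorem \ref{th1}. However, there is one genuine gap in your main-line regret argument: you discard $\mathbb{E}\big[\int_0^T\mu^\top g(t,y^*)\,dt\big]$ on the grounds that $g(t,y^*)\le\boldsymbol 0$ ``componentwise \dots or directly using $\bar g(t,y^*)\le\boldsymbol 0$ with the common-multiplier structure.'' Neither justification is available: feasibility of $y^*$ only gives $\sum_{i=1}^N g_i(t,y_i^*)\le\boldsymbol 0$, the individual $g_i(t,y_i^*)$ can have either sign, and the multipliers $\mu_i$ are local and generally disagree, so $\mu^\top g(t,y^*)=\sum_i\mu_i^\top g_i(t,y_i^*)$ need not be nonpositive. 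This term is precisely what the consensus penalty exists to control: the paper introduces $\bar\mu=\tfrac1N\sum_i\mu_i$, uses $\bar\mu^\top\sum_i g_i(t,y_i^*)\le 0$ to obtain $\mu^\top g(t,y^*)\le(\mu-\boldsymbol 1_N\otimes\bar\mu)^\top g(t,y^*)\le K_g\sum_{i,j}\|\mu_i-\mu_j\|$, and then shows, via connectivity and the symmetry and magnitude assumptions on the noise, that $\mathbb{E}[\tilde h(\mu)-\tilde h(0)]\ge\tfrac{1}{N^2}\big(1-2\mathbb{E}[\|\epsilon_{ij}(t)\|_1]\big)\sum_{i,j}\mathbb{E}[\|\mu_i-\mu_j\|]$, so that $K_\mu\ge N^2K_g$ together with the stated noise bound makes the expected penalty dominate. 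The same domination is needed again to drop $-\mu^\top g(t,y^*)+K_\mu\tilde h(\mu)$ in the fit derivation, where your ``$\mu\ge\boldsymbol 0$, $g\le\boldsymbol 0$'' shortcut fails for the same reason.

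You do describe essentially this mechanism in your closing paragraph, but you locate it \emph{inside} Lemma \ref{lemma5}. That is a structural misplacement: Lemma \ref{lemma5} is the bare saddle-point energy inequality and holds without either $K_\mu\ge N^2K_g$ or the bound on $\mathbb{E}[\|\epsilon_{ij}(t)\|_1]$; its proof uses only the symmetry of the noise density to show that $\dot\mu_i$ is, in conditional expectation, an unbiased (sub)gradient of the symmetrized penalty $\tilde h$. The domination of $\mu^\top g(t,y^*)$ by $K_\mu\,\mathbb{E}[\tilde h(\mu)]$ lives in the proof of Theorem \ref{th7} itself, exactly as the noiseless analogue $\phi(\mu)-K_\mu h(\mu)\le 0$ lives in the proof of Theorem \ref{th1} rather than in Lemma \ref{lemma4}. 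As organized, your proof would not close: either the false claim $\mu^\top g(t,y^*)\le 0$ is load-bearing, or the penalty-domination step must be moved out of the lemma and carried out explicitly in the theorem. The remaining steps (evaluating $V$ at $(x(0),\boldsymbol 0)$, the $2NK_fT$ bound from Assumption \ref{asp2}, and the quadratic-in-$\gamma$ optimization for the fit) are correct and mirror the paper.
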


	\begin{proof}
		See Appendix $F$.	
	\end{proof}

	\begin{remark}
		To the best of our knowledge, there are few existing works on distributed online optimization that take into account measurement noise, except for \citet{cao2021}, where the case of global feasible set constraints is considered. This paper further considers the problem with  local set constraints and coupled inequality constraints.
	\end{remark}

\section{Simulation}\label{sim}

		The local objective functions are time-varying quadratic functions as follows:
		\begin{align*}
			f_1(t,y_1)=&2(y_{1,a}{-}2\cos{t}{-}1)^2{+}2(y_{1,b}{-}\cos{1.5t}{-}1.5)^2;\\
			f_2(t,y_2)=&(y_{2,a}{-}\cos{2t}{-}1)^2{+}2(y_{2,b}{-}2\cos{1.7t}{-}3)^2;\\
			f_3(t,y_3)=&3(y_{3,a}{-}\cos{2t}{-}3)^2{+}(y_{3,b}{-}\cos{t}{-}1)^2;\\
			f_4(t,y_3)=&(y_{4,a}{-}3\cos{t}{-}2)^2{+}3(y_{4,b}{-}\cos{2t}{-}2)^2;\\
			f_5(t,y_5)=&(y_{5,a}{-}\cos{1.5t}{-}1.2)^2{+}(y_{5,b}{-}3\cos{1.5t}{-}1)^2\\
			&{+}2(y_{5,c}{-}\cos{2t}{-}3)^2;\\
			f_6(t,y_6)=&0.5(y_{6,a}{-}\cos{2t}{-}1)^2{+}2(y_{6,b}{-}2\cos{1.2t}{-}1)^2\\
			&{+}2(y_{6,c}{-}\cos{t}{-}1)^2.
		\end{align*}
		
		The feasible set of output variables $\mathcal{Y}\in[-1,6]^{12}$. The time-varying constraints of $6$ agents are defined as
		\begin{align*}
			g_1(t,y_1)=&(0.3\sin{15t}{+}1.7)y_{1,a}{+}(0.2\sin{10t}{+}1.8)y_{1,b}{-}1;\\
			g_2(t,y_2)=&(0.4\sin{20t}{+}1.6)y_{2,a}{+}(0.4\sin{20t}{+}1.6)y_{2,b}{-}2;\\
			g_3(t,y_3)=&(0.5\sin{10t}{+}1.5)y_{3,a}{+}(0.6\sin{25t}{+}1.4)y_{3,b}{-}3;\\
			g_4(t,y_4)=&(0.6\sin{15t}{+}1.4)y_{4,a}{+}(0.8\sin{15t}{+}1.2)y_{4,b}{-}4;\\
			g_5(t,y_5)=&(0.7\sin{10t}{+}1.3)y_{5,a}{+}(0.5\sin{10t}{+}1.5)y_{5,b}\\
			&{+}(0.3\sin{15t}{+}1.7)y_{5,c}{-}5;\\
			g_6(t,y_6)=&(0.8\sin{15t}{+}1.2)y_{6,a}{+}(0.6\sin{25t}{+}1.4)y_{6,b}\\
			&{+}(0.4\sin{20t}{+}1.6)y_{6,c}{-}6.
		\end{align*}
		The above constraint selection ensures that $\boldsymbol{0}$ must be a feasible output for all $t\ge 0$.
		
		\begin{example}
			Consider a heterogeneous multi-agent system composed of $6$ agents described by (\ref{eqsys}), where
			\begin{flalign}
				&\ x_i\in\begin{cases}
					\mathbb{R}^2 &i=1,2,3,4\\\mathbb{R}^3 &i=5,6
				\end{cases}, y_i\in\begin{cases}
					\mathbb{R}^2 &i=1,2,3,4\\\mathbb{R}^3 &i=5,6
				\end{cases},\notag
			\end{flalign}
			\begin{align}
				\begin{aligned}
					A_{1,2}&=\begin{bmatrix} 1 &0\\ 0 &2 \end{bmatrix}, & A_{3,4}&=\begin{bmatrix} 0 &2\\ -1 &1 \end{bmatrix}, &
					A_{5,6}&=\begin{bmatrix} 2 &1 &0\\ 0 &1 &1\\ 1 &0 &2 \end{bmatrix}, \\
					B_{1,2}&=\begin{bmatrix} 0 &1\\ 1 &3 \end{bmatrix}, 	& B_{3,4}&=\begin{bmatrix} 2 &1\\ 1 &0 \end{bmatrix}, & B_{5,6}&=\begin{bmatrix} 1 &0 &0\\ 0 &1 &0\\ 0 &0 &1 \end{bmatrix},\\
					C_{1,2}&=\begin{bmatrix} 2 &0\\ 0 &1 \end{bmatrix}, & C_{3,4}&=\begin{bmatrix} 2 &1\\ -1 &0 \end{bmatrix}, &
					C_{5,6}&=\begin{bmatrix} 3 &0 &0\\ 0 &1 &0\\ 0 &1 &2 \end{bmatrix}.
				\end{aligned}\notag
			\end{align}
			
			\begin{figure}[tbp]
				%\centerline{\includegraphics{PI.png}}
				\centering
				\includegraphics[width=0.45\textwidth]{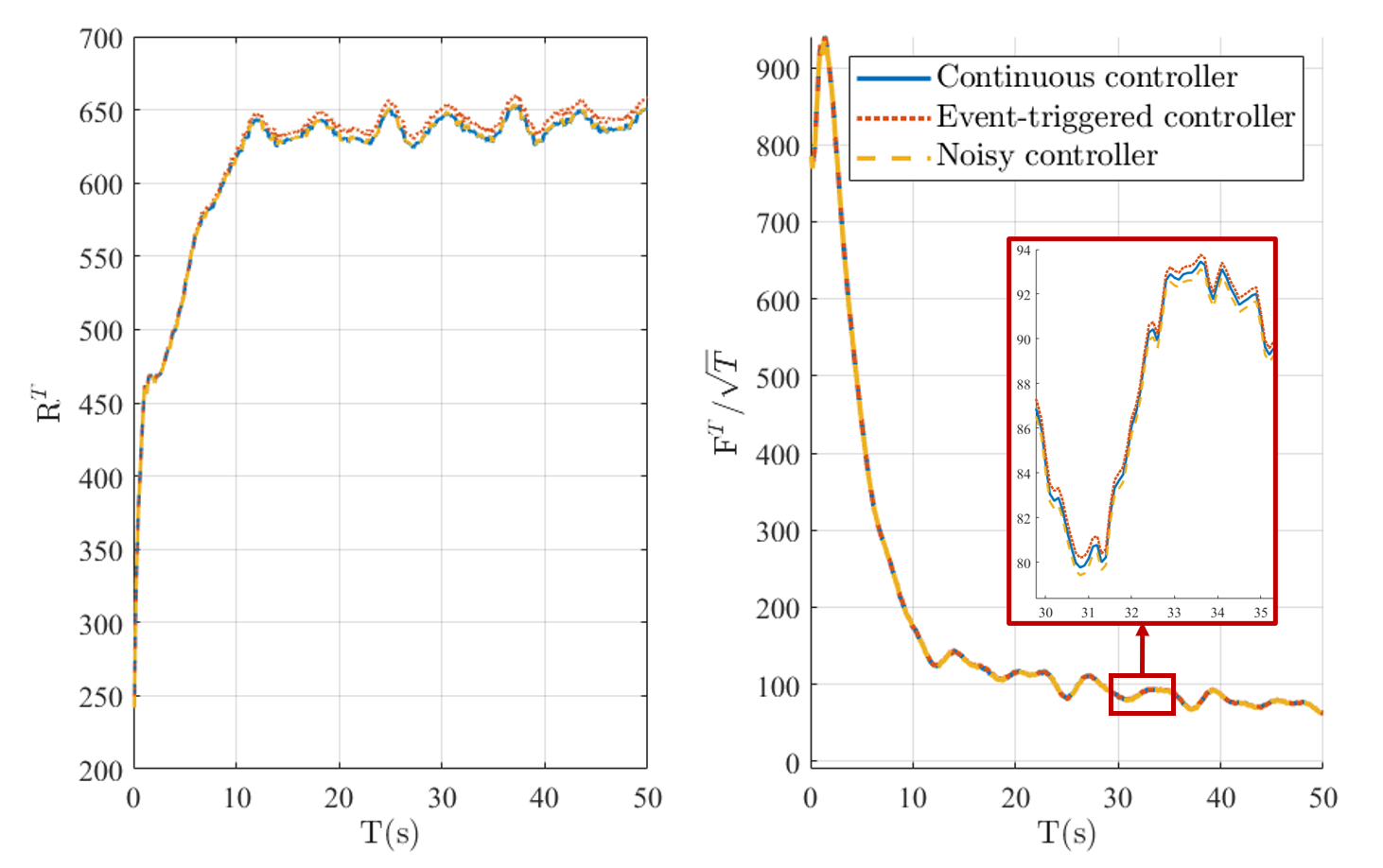} 
				\caption{ Evolution of $\mathcal{R}^T$ and $\mathcal{F}^T/\sqrt{T}$ with $3$ controllers.}
				\label{fig1}
			\end{figure}
		
		The communication network among these agents is  a ring topology, which is undirected and connected.
		The above settings satisfy Assumptions \ref{asp1}$-$\ref{asp2e}.

		For the numerical example, the selection of feedback matrices  \{$\Gamma_{i}$, $\Psi_{i}$, $\Upsilon_{i}$, $K_{i}$, $H_{i}$\}  is based on (\ref{eqle3}) and Theorem \ref{th1}. 	The the step size is set as $\varepsilon=0.1$. The initial values $x_i(0)$ are randomly selected in $[-5, 5]$, $\eta(0)=\boldsymbol{0}$, and $\mu(0)=\boldsymbol{0}$.
				
			\begin{figure}[tbp]
			%\centerline{\includegraphics{PI.png}}
			\centering
			\includegraphics[width=0.4\textwidth]{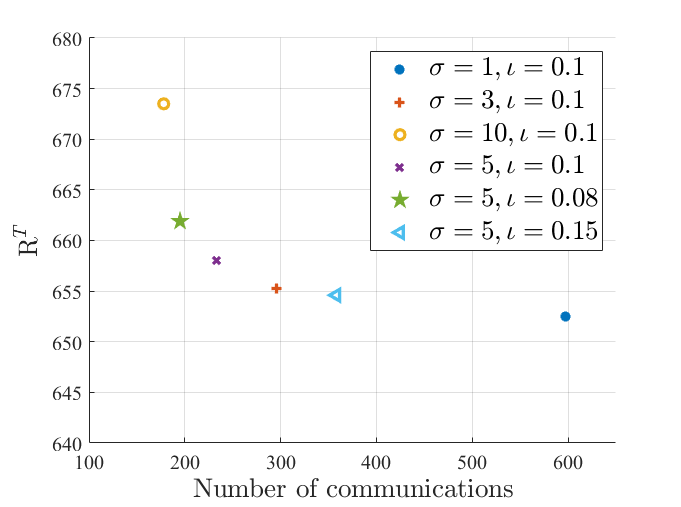} 
			\caption{ The regret with different $\sigma$ and $\iota$.}
			\label{fig2}
		\end{figure}
		
		 Figure \ref{fig1} illustrates that the continuous-time controller, event-triggered controller and controller with noise achieve constant regret bounds and sublinear fit bounds. The results are consistent with those established in Theorems \ref{th1}, \ref{th3} and \ref{th7}. Figure 2 displays the relationship between the total communication number and regret of the event triggered controller after $50$ seconds when $\sigma$ and $\iota$ take different values, showing the trade-off between communication frequency and regret value. 
\end{example}	
	
\begin{figure}[htbp]
	%\centerline{\includegraphics{PI.png}}
	\centering
	\includegraphics[width=0.4\textwidth]{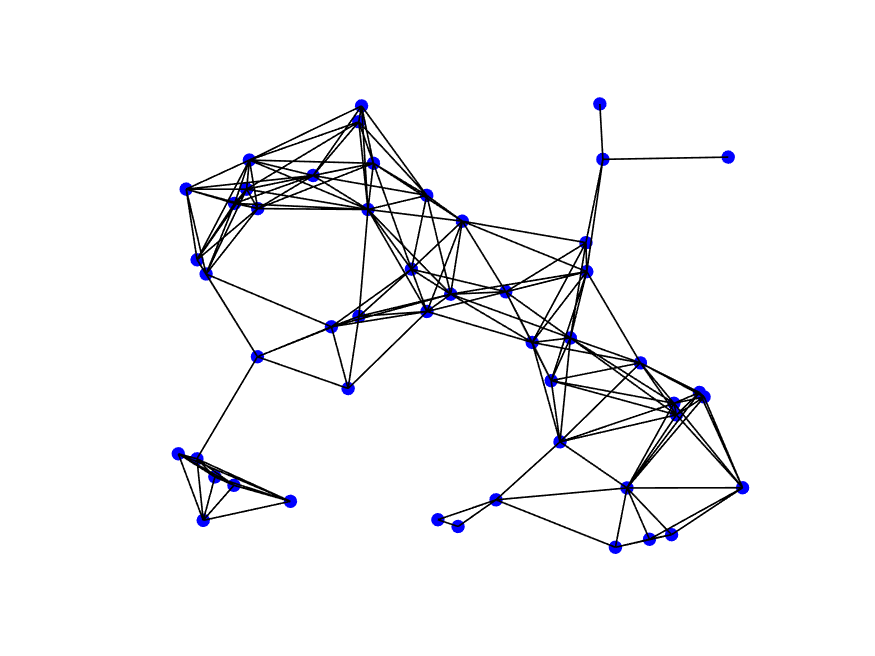} 
	\caption{Network of N=50 agents.}
	\label{pic21}
\end{figure}
\begin{figure}[htbp]
	%\centerline{\includegraphics{PI.png}}
	\centering
	\includegraphics[width=0.45\textwidth]{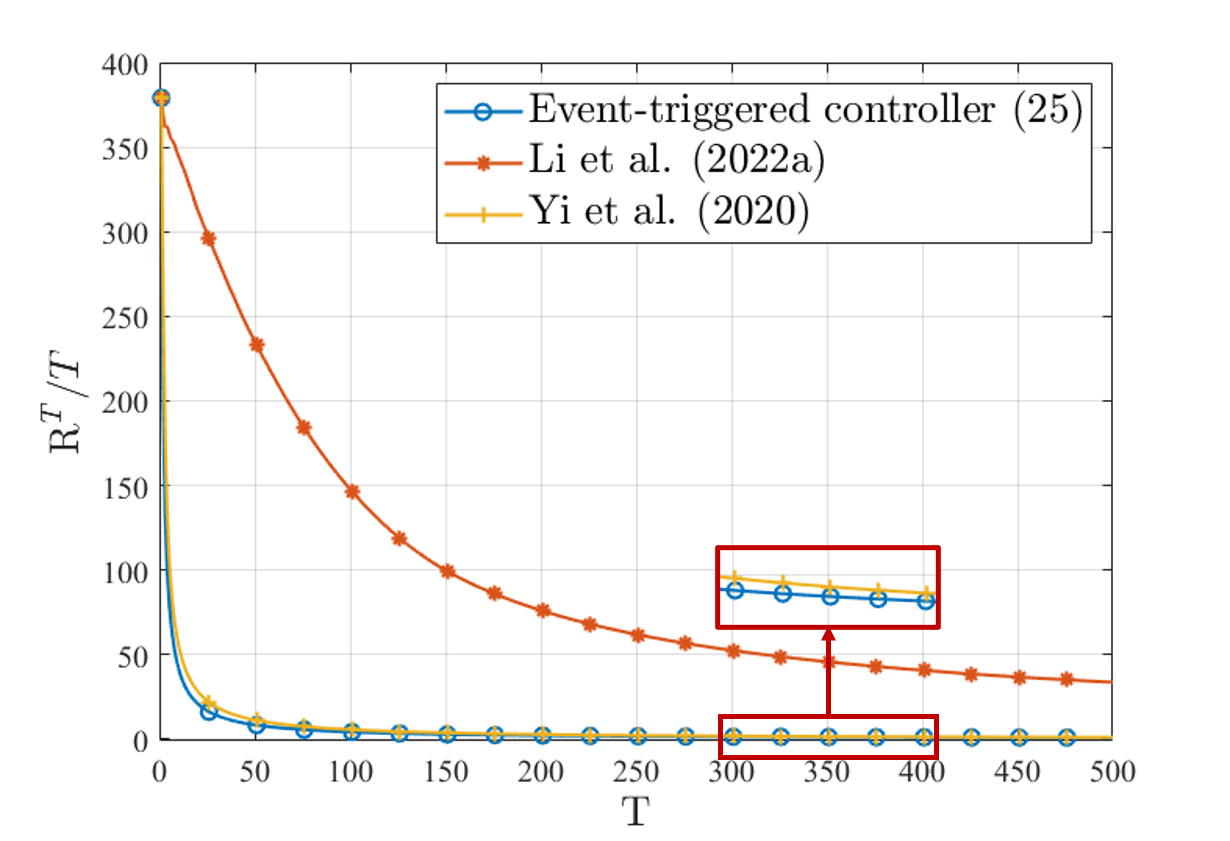} 
	\caption{ Evolution of $\mathcal{R}^T/T$.}
	\label{fig3}
\end{figure}

\begin{figure}[htbp]
	%\centerline{\includegraphics{PI.png}}
	\centering
	\includegraphics[width=0.45\textwidth]{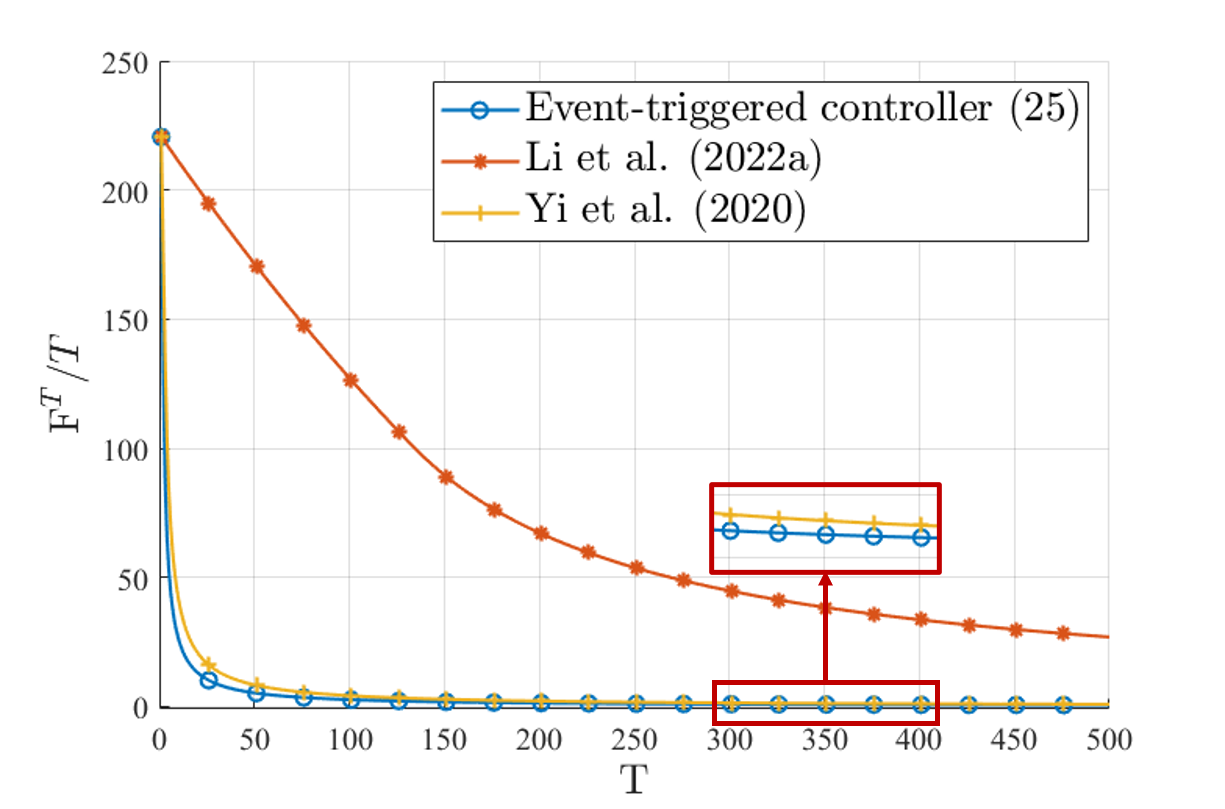} 
	\caption{ Evolution of $\mathcal{F}^T/T$.}
	\label{fig4}
\end{figure}

\begin{example}
	Since there are no distributed online algorithms to solve the
	problem of distributed online optimization with time-varying
	coupled inequality constraints for linear systems, we compare the algorithm with the distributed online algorithms \citep{yi2020, li2022}  for single integrator systems (i.e., $A_i=0$, $B_i=I$, $C_i=I$).

	We take into account a fleet of $N$ Plug-in (Hybrid) Electric Vehicles (PEVs) that need to be charged by utilizing the same electrical grid. In order to prevent undue strain on the network's lines and transformers, it becomes increasingly important to control the charging patterns of PEVs as the number of them grows. The PEV charging problem is to minimize the global cost of all PEVs in a time period, subject to some global constraints from the maximum power that can be provided by the grid, as well as local constraints for individual PEVs \citep{vuj2016}. It is assumed that the control computer of each PEV can exchange information with others in the grid and manage the charging power. Such the PEV charging problem falls within the framework of problem (\ref{question2}) and is modeled as
	\begin{align}
		\begin{split}
			\min_{\{x_i \in \mathcal{X}_i\}_{i=1}^N} &\int_{0}^{T}\sum_{i=1}^N \left(\frac{\alpha_{i,t}}{2}\|x_i\|^2+\beta_{i,t}^\top x_i\right)\,dt ,\\
			s.t. &\sum_{i=1}^{N}\left(H_{i,t}x_i+d_{i,t}\right)\le \boldsymbol0, \, t\in (0,T],\label{questione}
		\end{split}
	\end{align}
	where $\mathcal{X}_i\subseteq \mathbb{R}^{n_i}$ is the local set constraints, $x_i$ is the local decision variable of agent $i$, and $\{\alpha_{i,t}, \beta_{i,t}, H_{i,t}, d_{i,t}\}$ are time-varying and private parameters for agent $i$.

	In the simulation, an undirected and connected communication graph is randomly generated with $N=50$ nodes, as shown in Figure \ref{pic21}. Set $p_i=1$, $\mathcal{X}_i=[0,5]$ and $H_i=I$. The parameters $\alpha_{i,t}$, $\beta_{i,t}$ and $d_{i,t}$  are generated randomly from $[0.5,1]$, $[0.3,0.5]$ and $[0.5,0.7]$.

	Here,  controller (\ref{eqpi4}) in this paper with $\varepsilon=1$,  Algorithm 1 in \citet{li2022} with $\kappa=0.5$ and Algorithm 1 in \citet{yi2020} with $\kappa=0.5$ are used (the selection of $\kappa=0.5$ leads to the minimum bound according to the theoretical analyses in \citet{li2022,yi2020}). From Figure \ref{fig3} and \ref{fig4}, it can be seen that in this example event-triggered controller (\ref{eqpi4}) achieves smaller  regret and fit than the algorithm in \citet{li2022} and almost the same values as the algorithm in \citet{yi2020}.

\end{example}

\section{Conclusion}\label{res}	
	The problem of distributed online convex optimization for heterogeneous linear multi-agent systems with time-varying cost functions and time-varying coupling inequality constraints has been studied in this paper. A  continuous-time distributed controller has been proposed based on the saddle-point method and shown to have a constant regret bound and a sublinear fit bound. In order to avoid continuous communication and reduce the communication overhead, an event-triggered communication scheme has been developed which excludes Zeno behavior and also achieves a constant regret bound and a sublinear fit bound. 
	We have further extended the  controller to the case with measurement noise and demonstrated that the proposed  controller can guarantee a similar performance under certain noise interference.
	Possible future work includes   considering the impact of model uncertainty and the discrete-time version of the proposed algorithms by using exact discretization or forward Euler discretization.
\section*{Appendix}

	\subsubsection*{{A.} Proof of Lemma \ref{lemma4}}\label{ple4}

	The state estimation error can be defined as $e_x:=x-\hat{x}$. 
	Together with (\ref{eqsys}) and (\ref{ob1}), the error dynamics is obtained as 
	\begin{align}
		\dot{e}_x=(A-HC)e_x,\label{ex2}
	\end{align}
	where $H := diag(H_1,\dots,H_N)$.
		
	Calculating the time derivative of $V_1$ along the trajectories generated by (\ref{eqpi1}), one has
	\begin{align}
		\dot{V}_1
		=
		&(Cx-\tilde{y})^\top CB\Gamma \Pi_{\mathcal{Y}}[y, -\varepsilon \partial_{y}\mathcal{L}(t,y,\mu)]\notag\\
		&+ (Cx-\tilde{y})^\top C\Big(Ax-BK\hat x-(A{-}B K)\Psi\eta\Big)\notag\\
		\le 
		&\varepsilon (\tilde{y}-y)^\top \partial_{y}\mathcal{L}(t,y,\mu) + (Cx-\tilde{y})^\top CA_c(x - \Psi\eta)\notag\\
		&+ (Cx-\tilde{y})^\top BKe_x\notag\\
		\le 
		&\varepsilon\mathcal{{L}}(t,\tilde y,\mu(t))-\varepsilon\mathcal{{L}}(t,y(t),\mu(t)) -(\frac{\varepsilon l}{2}-2\varsigma_1)\|y - \tilde{y}\|^2\notag\\
		&+\frac{\|A_cC\|^2}{4\varsigma_1} \| x - \Psi\eta\|^2+ \frac{\|BK\|^2}{4\varsigma_1}\|e_x\|^2,\label{v111}
	\end{align}
	where $A_C:=A-BK$, $\varsigma_1$ is a positive parameter satisfying that $\varsigma_1\le\frac{\varepsilon l}{4}$.
	
	Similarly, the time derivative of $V_2$ along the trajectories generated by (\ref{eqpi1}) is calculated as
	\begin{align}
		\dot{V}_2
		=
		&(\mu-\tilde{\mu})^\top \Pi_{\mathbb{R}_+^q}[\mu,\varepsilon  \partial_{\mu}\mathcal{L}(t,y,\mu)]\notag\\
		\le 
		&\varepsilon\mathcal{{L}}(t,y(t),\mu(t)) -\varepsilon\mathcal{{L}}(t,y(t),\tilde{\mu}).&\label{v112}
	\end{align}
	Rewrite the system dynamics (\ref{eqpi2a}) and (\ref{ex2}) as a new system, one can get
	\begin{align}
		\begin{bmatrix} \dot{e}_x \\\dot x - \Psi\dot\eta  \end{bmatrix}
		=\underbrace{\begin{bmatrix} A-TC &\boldsymbol{0}\\BK  &A-BK \end{bmatrix}}_{A_H}
		\begin{bmatrix} e_x \\x - \Psi\eta  \end{bmatrix}.
	\end{align}
	
	Since $A-HC$ and $A-BK$ are Hurwitz, there exists a positive definite matrix $P$ for $\varsigma_2>\max\{\frac{\|A_cC\|^2}{4\varsigma_1}, \frac{\|BK\|^2}{4\varsigma_1}\}$ such that
	\begin{align}
		A_H^\top P + PA_H + \varsigma_2I < 0.\label{v113}
	\end{align}
	
	%Substituting (\ref{v111}-(\ref{v113})) into (\ref{v111})
	Thus, one has that
	\begin{align}
		\dot{V}\le \varepsilon \Big(\mathcal{{L}}(t,\tilde y,\mu(t))- \mathcal{{L}}(t,y(t),\tilde{\mu})\Big).\label{v114}
	\end{align}
	Integrating (\ref{v114}) from $0$ to $T$ on both sides leads to that
	\begin{align}
		&\int_{0}^{T} \Big( \mathcal{L}(t,y(t),\tilde{\mu})-\mathcal{L}(t,\tilde{y},\mu(t))\Big) \,dt\notag\\
		\le&-\frac{1}{\varepsilon}\int_{0}^{T} \dot{V}_{(\tilde{y}, \tilde{\mu})} (y(t), \mu(t)) \,dt\notag\\
		=&-\frac{1}{\varepsilon}\Big(V_{(\tilde{y}, \tilde{\mu})}(y(T), \mu(T)) - V_{(\tilde{y}, \tilde{\mu})}(y(0), \mu(0))\Big).
	\end{align}
	Because the energy function (\ref{v1}) is always nonnegative and $\mu(0)=\boldsymbol{0}$, the conclusion (\ref{l3}) can be obtained.

	\subsection*{B. Proof of Theorem \ref{th1}}\label{pth1}
	\textbf{The following is the proof of the regret bound.} 
	By choosing $\tilde{y}=y^*$ and $\tilde{\mu}=\boldsymbol{0}_{Nq}$ in Lemma \ref{lemma4}, one has
	\begin{align}
		\int_{0}^{T} \Big(\mathcal{L}(t,y,\boldsymbol{0}_{Nq}) - \mathcal{L}(t,y^*,\mu)\Big)dt\le\frac{V_{(y^*, \boldsymbol{0}_{Nq})}(x(0), \boldsymbol{0})}{\varepsilon}.\label{th1a}
	\end{align}
	According to (\ref{regert}) and (\ref{lag}), it can be obtained that
	\begin{align}
		\mathcal{R}^T{=}
		&\int_{0}^{T} \!\!\left(\mathcal{L}(t,y,\boldsymbol{0}_{Nq}) {-} \mathcal{L}(t,y^*,\mu)\right)dt
		{+}\int_{0}^{T} \!\mu^\top g(t,y^*)\, dt\notag\\
		&-\int_{0}^{T} K_\mu h(\mu)\, dt.\label{th1b}
	\end{align}	
	
	Consider the second term of the right-hand side of (\ref{th1b}). Let $\phi(\mu):=\mu^\top g(t,y^*)$ for simplicity. Then, by introducing an intermediate variable $\bar{\mu}:=\frac{1}{N}\sum_{i=1}^{N}\mu_i$ and the relationship $\sum_{i=1}^{N} g_{i}(t, y_i^*) \le \boldsymbol0$, one has that
	\begin{align}
		\phi(\mu)\le(\mu-\boldsymbol{1}_N\otimes \bar{\mu})^\top g(t,y^*).\label{phi1}
	\end{align}
	Further, one can obtain that
	\begin{align}
		\phi(\mu)^2
		&\le NK_g^2\left\|\mu-1_N\otimes \bar{\mu}\right\|^2\notag\\
		&= NK_g^2 \sum_{i=1}^{N} \Big\|\mu_i-\frac{1}{N}\sum_{j=1}^{N}\mu_j\Big\|^2\notag\\
		&\le K_g^2 \sum_{i=1}^{N} \sum_{j=1}^{N} \left\|\mu_i-\mu_j\right\|_1^2.\label{phi2}
	\end{align}
	
	Since $\mathcal{G}$ is undirected and connected,  there always exists a path connecting nodes $i_0$ and $j_0$ for any $i_0,j_0\in\mathcal{V}$, i.e.,
	\begin{align}
		h(\mu)^2=\left(\frac{1}{2}\sum_{i=1}^N\sum_{j=1}^Na_{ij}\left\|\mu_i-\mu_j\right\|_1\right)^2\ge \left\|\mu_{i_0}-\mu_{j_0}\right\|_1^2.\label{h1}
	\end{align}
	
	Then for $K_\mu \ge NK_g$, one has that
	$\phi(\mu)^2\le K_\mu^2 h(\mu)^2$,
	i.e.,
	\begin{align}
		\phi(\mu)-K_\mu h(\mu)\le 0.\label{th1c}
	\end{align}

	Substituting (\ref{th1a}) and (\ref{th1c}) into (\ref{th1b}) completes the proof.
	%$\hfill\blacksquare$
	
	\textbf{The following is the proof of the fit bound.}
	By Lemma \ref{lemma4} with $\tilde{y}=y^*$ and $\tilde{\mu}= \boldsymbol{1}_N \otimes \gamma$, where $\gamma=col(\gamma_1,\dots,\gamma_q)\in\mathbb{R}^q$ is a parameter to be determined later, one has that
	\begin{align}
		&\int_{0}^{T} \Big(\mathcal{L}(t,y,\boldsymbol{1}_N\otimes \gamma) - \mathcal{L}(t,y^*,\mu)\Big) \,dt\notag\\
		=
		&\int_{0}^{T} \bigg(f(t,y){+}\gamma^\top\!\! \sum_{i=1}^{N}g_i(t,y_i){-}f(t,y^*){-}\mu^\top g(t,y^*) +K_\mu h(\mu) \bigg) dt \notag\\
		\le
		& \frac{V_{(\tilde{y}, \tilde{\mu})}(y(0), \boldsymbol{0})}{\varepsilon}.\label{th2a}
	\end{align}
	Invoking Assumption \ref{asp2} yields
	\begin{align}
		\int_{0}^{T} \big(f(t,y^*)-f(t,y) \big)\,dt \le 2NK_fT.\label{th2b}
	\end{align}
	
	Substituting (\ref{th1c}) and (\ref{th2b}) into (\ref{th2a}), one has that
	\begin{align}
		\int_{0}^{T} \gamma^\top \sum_{i=1}^{N}g_i(t,y_i)\,dt \le& \frac{V_{(\tilde{y}, \tilde{\mu})}(y(0), \boldsymbol{0})}{\varepsilon} + 2NK_fT.\notag
	\end{align}
	
	By choosing
	$
		\gamma_j=\left\{
		\begin{aligned}
			& 0, &F_j^T\le 0 \\
			& \frac{\varepsilon}{N}F_j^T, &F_j^T>0
		\end{aligned}
		\right.,j=[q],
	$
	it can be concluded that
	\begin{align}
		\frac{\varepsilon}{N}\|\mathcal{F}^T\|^2\le &\frac{\|y(0){-}y^*\|^2+ 2x(0)^{\!\top} \!P\! x(0) + \frac{\varepsilon^2}{N}\|\mathcal{F}^T\|^2}{2\varepsilon} + 2NK_fT.\notag
	\end{align}
	
	It can be further obtained by transposition that
	\begin{align}
		\mathcal{F}^T {\le}& \frac{\sqrt{N}\|y(0){-}y^*\| {+}\sqrt{2N x(0)^{\!\top} \!P\! x(0)}}{\varepsilon}  {+}  2N\sqrt{\frac{K_f}{\varepsilon}}\sqrt{T}.
	\end{align}	
	%So $\mathcal{F}^T$ is upper bounded by a sublinear function that grows as 	$\mathcal{O}(\sqrt T)$.
	%$\hfill\blacksquare$	

	\subsection*{{C.} Proof of Lemma \ref{lemma6}}\label{ple6}
	Calculating the time derivative of the energy function $V_2$ together with (\ref{eqpi4}) yields
	\begin{align}
		\dot{V}_{2}
		= 
		&\!\sum_{i=1}^{N} \!(\mu_i{-}\tilde{\mu}_i\!)\!^\top \!\Pi_{\mathbb{R}_+^q}\!\Big[\mu_i,\varepsilon g_i(t,y_i) {-}2\varepsilon K\!_\mu\! \sum_{j=1}^N \!a_{i\!j}\sign{\hat\mu_i{-}\hat\mu_j}\Big] \notag\\
		%& \!+\! \sum_{i=1}^{N} (y_i\!-\!\tilde{y}_i\!)\!^\top \Pi_{\mathcal{Y}_i}\big[y_i, \!-\varepsilon\mathcal{L}_i^{y_i}(t,\!y_i,\!\mu_i)\big] \notag\\
		\le
		& \underbrace{\varepsilon\! \sum_{i=1}^{N} (\mu_i{-}\tilde{\mu}_i\!)\!^\top\! \mathcal{L}^{\mu_i}(t,\!y_i,\!\mu_i)}_{S_{1}}
		%{-}\varepsilon \!\sum_{i=1}^{N} (y_i\!-\!\tilde{y}_i\!)\!^\top\! \mathcal{L}_i^{y_i}(t,\!y_i,\!\mu_i)}_{S_{1}}\notag\\
		 \underbrace{ -2\varepsilon K_\mu\!\! \sum_{i=1}^{N} \!(\mu_i{-}\tilde{\mu}_i)\!^\top\!\! \sum_{j=1}^N \!\!a_{ij}\sign{\hat\mu_i-\hat\mu_j} }_{S_{2}} \notag\\
		&+ \underbrace{ \varepsilon K_\mu \!\sum_{i=1}^{N} \!(\mu_i{-}\tilde{\mu}_i)\!^\top  \sum_{j=1}^N a_{ij}\sign{\mu_i-\mu_j} }_{S_{3}}.	\label{vb1}
	\end{align}
	
	Since the Lagrangian (\ref{la}) is convex with respect to $y_i$ and concave with respect to $\mu_i$, one has that
	\begin{align}
		S_{1}+ \dot{V}_{1} + \dot{V}_{3} \le \varepsilon \Big(\mathcal{L}(t,\tilde{y},\mu) - \mathcal{L}(t,y,\tilde{\mu})\Big).\label{vb2}
	\end{align}
	
	Since $\tilde{\mu}= \boldsymbol1_N \otimes \gamma, \forall \gamma\in \mathbb{R}_+^q$ and graph $\mathcal{G}$ is undirected, it follows that
	\begin{align}
		S_{2}=
		&-\varepsilon K_\mu \sum_{i=1}^{N}  \sum_{j=1}^N a_{ij}(\mu_i-\mu_j)^\top \sign{\hat\mu_i-\hat\mu_j}\notag\\
		\le
		& \!-\!\varepsilon K_\mu \!\sum_{i=1}^{N}\!\sum_{j=1}^N a_{ij} \|\hat\mu_i{-}\hat\mu_j\|_1 \!+\! \varepsilon K_\mu\!\sqrt{q} \sum_{i=1}^{N}  \!\sum_{j=1}^N \|e_i{-}e_j\|,	\label{vb3}
	\end{align}
	where the last inequality holds since the relationship  $\|v\|_1\le\sqrt{q}\|v\|, \forall v\in\mathbb{R}^q$.
	
	Similarly, one has that
	\begin{align}
		S_{3}
		=
		&\frac{\varepsilon K_\mu}{2} \sum_{i=1}^{N} \sum_{j=1}^N a_{ij}\|\mu_i-\mu_j\|_1\notag\\
		\le	
		&\frac{\varepsilon K_\mu}{2} \sum_{i=1}^{N} \sum_{j=1}^N a_{ij}\|\hat\mu_i{-}\hat\mu_j\|_1 + \frac{\varepsilon K_\mu\sqrt{q}}{2} \sum_{i=1}^{N} \sum_{j=1}^N \|e_i{-}e_j\|.\label{vb4}
	\end{align}
	
	For the second item in (\ref{vb3}) and (\ref{vb4}), one can calculate that
	
	\begin{align}
		\sum_{i=1}^{N}  \sum_{j=1}^N  \|e_i-e_j\|
		\le& \sum_{i=1}^{N}  \sum_{j=1}^N  \|e_i\| + \sum_{i=1}^{N}  \sum_{j=1}^N  \|e_j\|\notag\\
		\le &\frac{1}{3\sqrt{q}}\sum_{i=1}^{N} \sum_{j=1}^N a_{ij} \|\hat\mu_i-\hat\mu_j\|_1 + \frac{2\sigma e^{-\iota t}}{3K_\mu\sqrt{q}} ,	
	\end{align}
	where the last inequality holds due to the trigger condition (\ref{tau2}).
	
	Summarizing the above-discussed analysis,	$\dot{V}_{(\tilde{y}, \tilde{\mu})}$ in (\ref{vb1}) is calculated as
	\begin{align}
		\dot{V}_{(\tilde{y}, \tilde{\mu})}
		\le
		%&S_{1}+S_{2}+S_{3}\notag\\
		%\le
		& \varepsilon \Big(\mathcal{L}(t,\tilde{y},\mu) - \mathcal{L}(t,y,\tilde{\mu})\Big) + \varepsilon\sigma e^{-\iota t}.\notag
	\end{align}
	
	By integrating it from $0$ to $T$ on both sides and omitting negative terms, it can be obtained that
	\begin{align}
		\int_{0}^{T} \Big( \mathcal{L}(t,y,\tilde{\mu}){-}\mathcal{L}(t,\tilde{y},\mu)\Big) dt {\le} \frac{V_{(\tilde{y}, \tilde{\mu})}(y(0), \boldsymbol{0})}{\varepsilon} {+} \frac{\sigma}{\iota}.
	\end{align}
	%$\hfill\blacksquare$
	
	\subsection*{{D.} Proof of Theorem \ref{th3}}	\label{pth3}
	The next proof is that the controller (\ref{eqpi4}) has no Zeno behavior.	
	In the trigger interval $[t_i^l,t_i^{l+1})$ for agent $i$, combining the definition of $e_i$ with (\ref{eqpi4b}), one can write the upper right-hand Dini derivative as
	\begin{align}
		D^+ e_i(t){=} - \!\Pi_{\mathbb{R}_+^q}[\mu_i, \varepsilon g_i(t,y_i) {-}2\varepsilon K_\mu \sum_{j=1}^N a_{ij}\sign{\hat\mu_i{-}\hat\mu_j}].\label{zeno1}
	\end{align}
	It is obvious that $e_i(t_i^l)=0$. Then, for $t\in (t_i^l,t_i^{l+1})$, the solution of (\ref{zeno1}) is
	\begin{align}
		e_i(t){=} \!\int_{t_i^l}^{t} \!- \!\Pi_{\mathbb{R}_+^q}[\mu_i, \varepsilon g_i(t,y_i) {-}2\varepsilon K_\mu \sum_{j=1}^N a_{ij}\sign{\hat\mu_i{-}\hat\mu_j}] \,d\tau. \label{zeno2}
	\end{align}
	%From Assumption \ref{asp2} and
	From Assumption \ref{asp2} and the inequality $\|\Pi_{\mathcal{S}}[x,v]\|\le\|v\|$ (cf. Remark 2.1 in \citet{zhang1995}), it can be obtained that the norm of the integral term in (\ref{zeno2}) is bounded, and let the upper bound of its norm be $\delta$.  It then follows from (\ref{zeno2}) that
	\begin{align}
		\|e_i(t)\| \le \delta(t-t_i^l).
	\end{align} 
	Hence, condition (\ref{tau2}) will definitely not be triggered before the following  condition holds:
	\begin{align}
		\delta(t-t_i^l)=\frac{1}{6N\sqrt{q}}\sum_{j=1}^N a_{ij} \|\hat\mu_i-\hat\mu_j\|_1 {+} \frac{\sigma e^{-\iota t}}{3N^2K_\mu\sqrt{q}} .\label{zeno3}
	\end{align}
	
	It is easy to obtain that the right-hand side of (\ref{zeno3}) is positive for any $t$,  which further implies that $t-t_i^l>0$. Hence, the value $t_i^{l+1}-t_i^l$ is strictly positive for any $t$. Furthermore, since the right-hand side of (\ref{zeno3}) approaches zero only when $t\to\infty$, one has that $t_i^l\to\infty$ with $l\to\infty$. Therefore, no Zeno behavior is exhibited.

	\subsection*{{E.} Proof of Lemma \ref{lemma5}}\label{ple5}
	Let us	denote by $ \partial_{y_i}\mathcal{\tilde{L}}(t,y,\mu)$ a subgradient of $\mathcal{\tilde{L}}$ with respect to $y_i$, i.e.,
	\begin{align}
		 \partial_{y_i}\mathcal{\tilde{L}}(t,y,\mu)\in \partial f_i(t,y_i)+\mu_i^\top \partial g_i(t,y_i).\label{lany}
	\end{align}
	It can be seen that $ \partial_{y_i}\mathcal{\tilde{L}}(t,y,\mu)= \partial_{y_i} \mathcal{{L}}(t,y,\mu)$.
	
	Similarly, denote by $ \partial_{\mu_i}\mathcal{\tilde{L}}(t,y,\mu)$ a subgradient of $\mathcal{\tilde{L}}$ with respect to $\mu_i$, i.e.,
	\begin{align}
		& \partial_{\mu_i}\mathcal{\tilde{L}}(t,y,\mu){\in}  -\! \frac{K_\mu}{2}\!\sum_{j=1}^N \! a_{ij}\Big[\sign{\mu_i{-}\mu_j {+} \epsilon_{ij}(t)\|\mu_i(t){-}\mu_j(t)\|}\notag\\
		& + \sign{\mu_i{-}\mu_j {-} \epsilon_{ij}(t)\|\mu_i(t){-}\mu_j(t)\|}\Big] + g_i(t,y_i).\label{lanmu}
	\end{align}
	
	Because the probability density function of $\epsilon_{ij}(t)$ is symmetric, one has
	\begin{align}
		\mathbb{E}\Big[\sign{\mu_i(t)-\mu_j(t)+\epsilon_{ij}(t)\|\mu_i(t)-\mu_j(t)\|}\Big|\mu(t)\Big]\notag\\
		=\mathbb{E}\Big[\sign{\mu_i(t)-\mu_j(t)-\epsilon_{ij}(t)\|\mu_i(t)-\mu_j(t)\|}\Big|\mu(t)\Big],\label{condi1}
	\end{align}
	and it can be further obtained that
	\begin{align}
		\mathbb{E}\Big[\dot{\mu}_i(t)\Big|\mu(t)\Big]=\mathbb{E}\Big[ \partial_{\mu_i}\mathcal{\tilde{L}}(t,y,\mu(t))\Big|\mu(t)\Big].
	\end{align}
	
	\begin{comment}	
	Calculating the time derivative of the energy function $V_1$ together with (\ref{eqpi7}) yields
	\begin{align}
	\dot{V}_1
	=
	&(y(t)-\tilde{y})^\top \dot{y}(t)\notag\\
	=
	&\sum_{i=1}^{N} (y_i(t)-\tilde{y}_i)^\top C_i\Big((A_i{-}B_i K_{\alpha_i})x_i(t)\notag\\
	&{+}B_iK_{\beta_i}\left(\Pi_{\mathcal{Y}_i}[y_i(t), -\varepsilon\partial_{y_i}\mathcal{\tilde{L}}(t,y(t),\mu(t))]\right)\Big) \notag\\
	=
	&\sum_{i=1}^{N} \!(y_i(t){-}\tilde{y}_i)\!^\top \Pi_{\mathcal{Y}_i}\!\big[y_i(t),\! \!-\varepsilon\partial_{y_i}\mathcal{\tilde{L}}(t,y(t),\mu(t))\big] \notag\\
	\le
	&\sum_{i=1}^{N} (y_i(t)-\tilde{y}_i)^\top \Big(-\varepsilon\partial_{y_i}\mathcal{\tilde{L}}(t,y(t),\mu(t))\Big)\notag\\
	\le
	&\mathcal{L} (t,\tilde{y},\mu(t)) - \mathcal{L} (t,y(t),\mu(t)).	
	\end{align}
	where the first inequality  holds because of  (\ref{eqle2}) and the last inequality holds since the Lagrangian (\ref{lan}) is convex with respect to $y_i$.
	\end{comment}		
	Calculating the time derivative of the energy function $V_2$ together with (\ref{eqpi7}) yields	
	\begin{align}
		\dot{V}_{2}
		{=} 
		&(\mu(t)-\tilde{\mu})^\top \dot{\mu}(t)\notag\\
		=
		&\sum_{i=1}^{N} (\mu_i(t)\!-\!\tilde{\mu}_i)\!^{\top} \Pi_{\mathbb{R}_+^q}\Big[\mu_i(t),\varepsilon g_i(t,y_i(t))\notag\\
		&-\varepsilon K_\mu \!\sum_{j=1}^N a_{ij} \sign{\mu_i(t){-}\mu_j(t){+}\epsilon_{ij}(t)\|\mu_i(t){-}\mu_j(t)\|}\Big] \notag\\
		\le
		& \sum_{i=1}^{N} (\mu_i(t){-}\tilde{\mu}_i)^{\!\top}\! \Big(\varepsilon g_i(t,y_i(t)){-}\varepsilon K_{\!\mu}\!\! \sum_{j=1}^N a_{ij} \sign{\mu_i(t)\notag\\
			&{-}\mu_j(t) + \epsilon_{ij}(t)\|\mu_i(t){-}\mu_j(t)\|}\Big),\label{v42}
	\end{align}
	where the inequality  holds because of  (\ref{eqle2}).
	
	Taking conditional expectations of (\ref{v42}) at $\mu(t)$ and taking advantage of (\ref{condi1}), one has
	\begin{align}
		\mathbb{E}\big[\dot{V}_{2}\big|\mu(t)\big]
		\le	
		&\varepsilon\sum_{i=1}^{N} \mathbb{E}\Big[(\mu_i(t)-\tilde{\mu}_i)^\top 
		 \partial_{\mu_i}\mathcal{\tilde{L}}(t,y(t),\mu(t))\Big|\mu(t)\Big].\label{v43}	
	\end{align}
	
	Taking expectations on both sides of (\ref{v43}) and making use of the concavity of Lagrangian (\ref{lan}) with respect to $\mu_i$, it can be obtained that
	\begin{align}
		\mathbb{E}\big[\dot{V}_{2}\big]
		\le &\varepsilon\mathbb{E}\Big[\sum_{i=1}^{N}(\mu_i(t)-\tilde{\mu}_i)^\top \partial_{\mu_i}\mathcal{\tilde{L}}(t,y(t),\mu(t))\Big]\notag\\
		\le
		&\varepsilon\mathbb{E}\big[\mathcal{\tilde{L}}(t,y(t),\mu(t))-\mathcal{\tilde{L}}(t,y(t),\tilde{\mu})\big].\label{v44}
	\end{align}
	
	%\mathbb{E}\big\{\dot{V}_{2}\big|\mu(t)\big\}

	In this manner, one has
	\begin{align}
		\mathbb{E}\big[\dot{V}\big]
		=
		&\mathbb{E}\big[\dot{V}_{1}\big]+\mathbb{E}\big[\dot{V}_{2}\big] + \mathbb{E}\big[\dot{V}_{3}\big]\notag\\
		\le
		&\varepsilon \mathbb{E}\big[\mathcal{\tilde{L}}(t,\tilde y,\mu(t))-\mathcal{\tilde{L}}(t,y(t),\tilde{\mu})\big].\label{v45}
	\end{align}

	Integrating (\ref{v45}) from $0$ to $T$ on both sides leads to that
	\begin{align}
		&\int_{0}^{T} \mathbb{E}\big[ \mathcal{\tilde{L}}(t,y(t),\tilde{\mu}) - \mathcal{\tilde{L}}(t,\tilde y,\mu(t)) \big] \,dt
		\le&\frac{1}{\varepsilon}		
		V_{(\tilde{y}, \tilde{\mu})}(y(0), \mu(0))
		.
	\end{align}
	Because the energy function (\ref{v1}) is always nonnegative and $\mu(0)=\boldsymbol{0}$, the conclusion (\ref{l3}) can be obtained.
	%$\hfill\blacksquare$

	\subsection*{F. Proof of Theorem \ref{th7}}	\label{pth7}

	By choosing $\tilde{y}=y^*$ and $\tilde{\mu}=\boldsymbol{0}_{Nq}$ in Lemma \ref{lemma5}, one has
	\begin{align}
		\int_{0}^{T} \mathbb{E}\big[ \mathcal{\tilde{L}}(t,y,\boldsymbol{0}) - \mathcal{\tilde{L}}(t,y^*,\mu(t)) \big] \,dt
		\le
		\frac{V_{(y^*, \boldsymbol{0})}(y(0), \mu(0))
		}{\varepsilon}.\label{th4a}
	\end{align}
	According to (\ref{regert}) and (\ref{lag}), it can be obtained that
	\begin{align}
		&f(t,y(t))-f(t,y^*)\notag\\
		=&\mathcal{\tilde{L}}(t,y(t),\boldsymbol{0}) - \mathcal{\tilde{L}}(t,y^*\!,\mu(t)) + \mu(t)^{\!\top} \!g(t,y^*) \notag\\
		&+K_\mu\tilde{h}(0)-K_\mu\tilde{h}(\mu(t)).	\label{v46}
	\end{align}
	%	\begin{align}
	%		\mathcal{R}^T{=}
	%		&\int_{0}^{T} \!\!\left(\mathcal{L}(t,y,\boldsymbol{0}_{Nq}) {-} \mathcal{L}(t,y^*,\mu)\right)dt
	%		{+}\int_{0}^{T} \!\mu^\top g(t,y^*)\, dt\notag\\
	%		&-\int_{0}^{T} K_\mu h(\mu)\, dt.\label{th1b}
	%	\end{align}	
	
	Similar to the analysis of (\ref{phi1}) and (\ref{phi2}), one can obtain that
	\begin{align}
		\phi(\mu)
		=\mu(t)^\top g(t,y^*)
		\le K_g \sum_{i=1}^{N} \sum_{j=1}^{N} \left\|\mu_i-\mu_j\right\|.\label{phi3}
	\end{align}
	
	%Since $\mathcal{G}$ is undirected and connected,  there always exists a path connecting nodes $i_0$ and $j_0$ for any $i_0,j_0\in\mathcal{V}$, i.e.,
	In the same way of (\ref{h1}), one has
	\begin{align}
		&\tilde{h}(\mu(t))- \tilde{h}(0)
		\ge
		&\sum_{i=1}^N\sum_{j=1}^Na_{ij} \left(\frac{1}{2}-\|\epsilon_{ij}(t)\|_1\right) \left\|\mu_i(t)-\mu_j(t)\right\|.\label{h2}
	\end{align}
	
	Taking expectations on both sides of (\ref{h2}), one has
	\begin{align}
		&\mathbb{E}\big[\tilde{h}(\mu(t))- \tilde{h}(0)\big]\notag\\
		\ge
		&\left(\frac{1}{2}- \mathbb{E}\big[\|\epsilon_{ij}(t)\|_1\big] \right) \sum_{i=1}^N\sum_{j=1}^Na_{ij} \mathbb{E}\big[\|\mu_{i_0}-\mu_{j_0}\|\big] \notag\\
		\ge
		&\left(1- 2\mathbb{E}\big[\|\epsilon_{ij}(t)\|_1\big] \right) \mathbb{E}\big[\|\mu_{i_0}-\mu_{j_0}\|\big]\notag\\
		=
		&\frac{1}{N^2}\left(1- 2\mathbb{E}\big[\|\epsilon_{ij}(t)\|_1\big] \right) \sum_{i=1}^N\sum_{j=1}^N \mathbb{E}\big[\|\mu_i(t){-}\mu_j(t)\|\big]. \label{h3}
	\end{align}
	where the first inequality holds since the independence between $\epsilon_{ij}(t)$ and $\mu(t)$, and the second inequality holds since the similar way to (\ref{h1}).
	%\begin{align}
	%	\tilde{h}(0)= \frac{1}{2}\sum_{i=1}^N\sum_{j=1}^Na_{ij}\|\epsilon_{ij}(t)\|_1 \|\mu_i(t)-\mu_j(t)\|.
	%\end{align}
	
	Taking expectations on both sides of (\ref{v46}) and substituting (\ref{phi3}), (\ref{h3}) into (\ref{v46}), one has
	\begin{align}
		&\mathbb{E}[f(t,y(t))]-f(t,y^*)\notag\\
		%\le
		%&\mathbb{E}\left[\mathcal{\tilde{L}}(t,y(t),\boldsymbol{0}) - \mathcal{\tilde{L}}(t,y^*,\mu(t))\right]\notag\\
		%&+(K_g-\frac{K_\mu}{N^2})\sum_{i=1}^{N} \sum_{j=1}^{N} \mathbb{E}\left[\left\|\mu_i(t)-\mu_j(t)\right\|\right]\notag\\
		%&+\sum_{i=1}^N\sum_{j=1}^Na_{ij} \mathbb{E}\left[\|\epsilon_{ij}(t)\|_1\right] \mathbb{E}\left[\left\|\mu_i(t)-\mu_j(t)\right\|\right]\notag\\
		\le
		&\mathbb{E}\left[\mathcal{\tilde{L}}(t,y(t),\boldsymbol{0}) - \mathcal{\tilde{L}}(t,y^*,\mu(t))\right]\notag\\
		&+\Big(K_g{-}\frac{K_\mu}{N^2}(1{-}2\mathbb{E}\left[\|\epsilon_{ij}(t)\|_1\right])\Big)\sum_{i=1}^{N} \sum_{j=1}^{N} \mathbb{E}\big[\|\mu_i(t){-}\mu_j(t)\|\big]\notag\\
		\le
		&\mathbb{E}\left[\mathcal{\tilde{L}}(t,y,\boldsymbol{0}) - \mathcal{\tilde{L}}(t,y^*,\mu(t))\right],\label{v47}
	\end{align}
	where the last inequality holds because of the assumption stated in Theorem \ref{th7} that $\mathbb{E}\left[\|\epsilon_{ij}(t)\|_1\right]\le \frac{1}{2}-\frac{N^2K_g}{2K_\mu}$.

	Integrating (\ref{v47}) from $0$ to $T$ on both sides leads to the regret bound. The proof of fit bound follows similarly as in Theorem \ref{th1}.

%\section*{Acknowledgements}
\bibliographystyle{apalike2}
\bibliography{bibfile}           % and a bib file to produce the 
% bibliography (preferred). The
% correct style is generated by
% Elsevier at the time of printing.

\end{document}